\providecommand{\U}[1]{\protect\rule{.1in}{.1in}}
\providecommand{\U}[1]{\protect\rule{.1in}{.1in}}
\newcommand{\BE}{\begin{equation}}
\newcommand{\EE}{\end{equation}}
\numberwithin{equation}{section}
\newtheorem{proposition}{Proposition}[section]
\newtheorem{theorem}[proposition]{Theorem}
\newtheorem{lemma}[proposition]{Lemma}
\newtheorem{remark}[proposition]{Remark}
\newtheorem{example}[proposition]{Example}
\def\dfrac{\displaystyle\frac}
\begin{document}

 \title{\bf  On a second order scheme for space fractional
diffusion equations with variable coefficients}
 \author{Seakweng Vong\thanks{Email: swvong@umac.mo.} \quad Pin Lyu\thanks{
 Email: lyupin1991@163.com.}\\
{\footnotesize  \textit{Department of Mathematics, University of Macau, Avenida da Universidade, Macau, China}}}
\date{}
 \maketitle
\date{}

\begin{abstract}
 We study a second order scheme for spatial fractional differential equations with variable coefficients.
 Previous results mainly
  concentrate on equations with diffusion coefficients that are proportional to each
  other. In this paper, by further study on the generating function of the discretization matrix,
  second order convergence of the  scheme is proved for diffusion coefficients satisfying a certain condition
  but are not necessary to be proportional.
  The theoretical results are justified by numerical tests.
\end{abstract}

\noindent{\bf Keywords:} fractional diffusion equation; variable coefficients;
weighted and shifted Gr\"{u}nwald-Letnikov formulas;
 stability and convergence of numerical methods
%

\section{Introduction}
 In recent studies, people find that problems from different areas such as chemistry, physics, engineering and medical science can be
 modeled by fractional derivatives. The study of fractional differential equations (FDEs) give rise to an active research direction both theoretically and numerically. However, due to the nonlocal property of the fractional differential operators, many classical methods for differential equations cannot be applied directly, it makes this research direction very challenging.
 In this paper, we concentrate on numerical aspect of solving fractional differential equations. More specifically,
 we consider high order finite difference  scheme for  differential equation with spatial fractional derivatives in the following form:
 \begin{equation}\label{fde}
\begin{array}{l}
 \displaystyle{\frac{\partial u(x, t)}{\partial t}=
 d_+(x)\,{_{x_L}}D_x^{\alpha}u(x,t)+d_-(x)\,{_x}D_{x_R}^{\alpha}u(x,t)+f(x,t)}, \quad x\in(x_L,x_R),~t\in(0,T],\\
 \vspace{2mm}
 u(a,t)=u(b,t)=0, \qquad t\in[0,T],\\
 u(x,0)=\varphi(x),\qquad x\in[x_L,x_R],
\end{array}
\end{equation}
where $\alpha\in(1,2)$,
and $d_+(x)$, $d_-(x)$ are nonnegative functions. The
 ntotations
 ${_{x_L}}D_x^{\alpha}u(x)$ and $\,{_x}D_{x_R}^{\alpha}u(x)$ denote the $\alpha$-order left and right Riemann-Liouville
fractional derivatives of $u(x)$, respectively, and they are defined
as
$$
\begin{array}{l}\vspace{2mm}
\displaystyle{\,{_{x_L}}D_x^{\alpha}u(x)=\dfrac{1}{\Gamma(2-\alpha)}\dfrac{\partial^2}{\partial
x^2}\int_{x_L}^x \dfrac{u(\xi)}{(x-\xi)^{\alpha-1}}\,d\xi},\\
\displaystyle{\,{_x}D_{x_R}^{\alpha}u(x)=\dfrac{1}{\Gamma(2-\alpha)}\frac{\partial^2}{\partial
x^2}\int_x^{x_R} \frac{u(\xi)}{(\xi-x)^{\alpha-1}}\,d\xi},
\end{array}
$$
 with $\Gamma(\cdot)$ being the gamma function.

 Quite a lot of progress have been made for the study of space FDEs
 in past decades \cite{Deng_Secondorder,Deng_Fourthorder,DengCCP,DengSecondorder2,VongNACO,DengJSC,SousaANM,Yuste,SousaCMA,DengAMM,DengNPDE,Hao-Sun,VongNA,Celik,zhao_siamJSC,DengSIAMjsc,LiuF1,LiuF2,LiuAMC2007,Lin-Jin,LiuF_NA,Pan_NA}.
 High order difference approximations are very popular in solving both space FDEs \cite{LiuF2,Deng_Secondorder,Deng_Fourthorder,DengCCP,DengSecondorder2,VongNACO,DengJSC,SousaANM,Yuste,SousaCMA,DengAMM,DengNPDE,Hao-Sun,VongNA,Celik,zhao_siamJSC,DengSIAMjsc} and time FDEs \cite{SunJCP1,
 VongJCP,SunJSC1,VongIJCM,LiaoJSC,LyuVong}. Interested reader can refer to \cite{Deng_Secondorder,Deng_Fourthorder,DengCCP,DengSecondorder2,VongNACO,DengJSC,SousaANM,Yuste} and the references therein for tracing  the whole picture that are closely related to this paper.
 First order approximation of fractional derivatives by Gr\"{u}nwald-Letnikov formula was introduced in \cite{Podlubny}.
 Then, by considering the  weighted and shifted
 Gr\"{u}nwald-Letnikov difference (WSGD)
 formulas and weighted and shifted Lubich
 formulas, second and fourth order approximation were developed in  \cite{Deng_Secondorder} and \cite{Deng_Fourthorder}, respectively.
 The corresponding schemes were shown to be unconditionally stable and convergent.
 We remark that the above results on high order schemes are restricted to cases
 that the diffusion coefficients are proportional to each other \cite{Deng_Fourthorder,DengCCP,DengSecondorder2,VongNACO} or, as special case, are constants \cite{Deng_Secondorder,DengSecondorder2,DengNPDE}.
One of the usually used ways for the theoretical analysis of this kind of difference scheme is to show that the corresponding differentiation matrices have eigenvalues with negative real parts, see \cite{Deng_Secondorder} and \cite{Deng_Fourthorder} for examples.
 To our knowledge, if the whole range of $\alpha\in(1,2)$ is under consideration,
 convergence of schemes to \eqref{fde} with general variable coefficients can be established only for (spatial) first order discretization due to its differentiation matrix is diagonally dominant, but
this property fails for higher order differentiation matrices with general diffusion coefficients, see Remark \ref{exampleD} in the next section.

 In this paper, we focus on a second order WSGD approximation, which was first introduced in \cite{Deng_Secondorder}, for space fractional diffusion equations with variable coefficients. Our aim is to establish the theoretical analysis for this type of second order scheme by proving its differential matrix has eigenvalues with nonpositive real parts. We show that if the diffusion coefficients satisfy a certain condition,
 then the symmetric part  of the differentiation matrix $A$ is negative semidefinite which implies that the real parts of all eigenvalues of $A$ are nonpositive \cite{MatrixAnalysisHorn}. The main idea to achieve this is based on further analysis of the generating function of the differentiation matrix.
 Consequently, the analysis for the second order scheme can be carried out.
We remark that the generating function can be a good tool to analyze some properties of the differential matrix which has Toeplitz structure. This approach has also been adopted in our recent work \cite{VongShi}.
Here we remark that, since the discretization coefficients of high order difference operators (at least second order ones, and not only the WSGD operator) do not have the good property possessed by the first order one, the corresponding scheme  may not work as expected for general variable coefficients.
This fact is reflected in subsection \ref{counterexample}, where some examples that do not satisfy our proposed condition \eqref{cond} are tested and numerical results indicate that the second order scheme fails to solve these examples with reliable behavior. Therefore it is interesting and necessary to derive some conditions under which the scheme works properly.

 This paper is organized as follows. Some preliminary concepts about the one-dimensional second order scheme are reviewed in section \ref{SecondorderSection}. Section \ref{1dsc} presents our main result, a condition is derived and under which the one-dimensional scheme is shown to be stable and convergent with respect to $L^2$ norm by using discrete energy method. In section \ref{secondorder2D}, an alternative condition is introduced and the second order ADI scheme for two-dimensional problem is considered. In section \ref{numerical}, some numerical examples are carried out to justify our theoretical analysis. A brief conclusion is followed in the last section.

\section{Preliminaries}\label{SecondorderSection}
Let $M$, $N$ be positive integers, and let $h = (x_R-x_L)/M$, $\tau =
T/N$ be the space step and time step, respectively. We define
spatial and temporal partitions: $x_i = x_L + i h$ for $i=
0,1,\ldots,M$; $t_n = n\tau$ for $n = 0,1,\ldots,N$. In
\cite{{Deng_Secondorder}}, Tian et al. introduced the WSGD formulas
and considered a class of second order discretization formula with the following form:
\begin{equation}\label{FD2nd}
\begin{array}{l} \vspace{2mm}
\displaystyle{{_{x_L}}D_x^{\alpha}u(x_i)=\frac{1}{h^{\alpha}}\sum_{k=0}^{i+1}w_{k}^{(\alpha)}u(x_{i-k+1})+{\cal O}(h^2)},\\
\displaystyle{{_x}D_{x_R}^{\alpha}u(x_i)=\frac{1}{h^{\alpha}}\sum_{k=0}^{M-i+1}w_{k}^{(\alpha)}u(x_{i+k-1}
)+{\cal O}(h^2)},
\end{array}
\end{equation}
which is under the smooth assumptions $u\in L^1(\mathbb{R})$; and $_{-\infty}D_x^{\alpha+2}u$, $_xD_\infty^{\alpha+2}u$ and their Fourier transform belong to $L^1(\mathbb{R})$. The coefficients $w_{k}^{(\alpha)}$ depend on a pair of shifting parameters $(p,q)$. In this paper, we concentrate on
\begin{equation}\label{w_{k}(1,0)}
 w_0^{(\alpha)}=\frac{\alpha}{2}g_0^{(\alpha)},
 ~w_k^{(\alpha)}=\frac{\alpha}{2}g_k^{(\alpha)}+\frac{2-\alpha}{2}g_{k-1}^{(\alpha)}~\mbox{  for  }~k\geq1,
\end{equation}
which corresponds to $(p,q)=(1,0)$ (\cite{Deng_Secondorder})
, where  $g_k^{(\alpha)}$ are the coefficients of the power series of the function $(1-z)^\alpha$, and they can be evaluated recursively as
\begin{align}\label{gkalpha}
g_0^{(\alpha)}=1,\quad g_k^{(\alpha)}=\Big(1-\frac{\alpha+1}{k} \Big)g_{k-1}^{(\alpha)},~\mbox{ for }~k=1,2,\ldots.
\end{align}

Let $u^{n}_i$ be the numerical approximation of $u(x_i,t_n)$, and $d_{+,i}=d_+(x_i)$, $d_{-,i}=d_-(x_i)$, $\varphi_i=\varphi(x_i)$, $f^{n+\frac{1}{2}}_i=f(x_i,t_{n+\frac{1}{2}})$, where $t_{n+\frac{1}{2}}=\frac{1}{2}(t_n+t_{n+1})$, $i=1, 2,\ldots, M-1$, $n=0,1,\ldots,N-1$.
Then applying the Crank-Nicolson technique and approximations
(\ref{FD2nd}) to the time derivative and the space fractional derivatives of (\ref{fde}) respectively, we get
\begin{align}\nonumber
 \frac{u_{i}^{n+1}-u_{i}^{n}}{\tau}=
 &\frac{1}{2h^{\alpha}}\left(d_{+,i}\sum_{k=0}^{{i}}w_{k}^{(\alpha)}u_{i-k+1}^{n}
 +d_{+,i}\sum_{k=0}^{{i}}w_{k}^{(\alpha)}u_{i-k+1}^{n+1}\right.\\\label{CN}
 &\left.+d_{-,i}\sum_{k=0}^{{M-i}}w_{k}^{(\alpha)}u_{i+k-1}^{n}
 +d_{-,i}\sum_{k=0}^{{M-i}}w_{k}^{(\alpha)}u_{i+k-1}^{n+1}\right)+f_{i}^{n+\frac{1}{2}}+R_i^{n+\frac12},
\end{align}
where $R_i^{n+\frac12}\leq c_1(\tau^2+h^2)$ for a positive constant $c_1$.

Multiplying \eqref{CN} by $\tau$ and omitting the small term $\tau R_i^{n+\frac12}$, we obtain the following equation:
\begin{align}\nonumber
u_{i}^{n+1}&-\frac{\tau}{2h^{\alpha}}\left(d_{+,i}\sum_{k=0}^{{i}}w_{k}^{(\alpha)}u_{i-k+1}^{n+1}
+d_{-,i}\sum_{k=0}^{{M-i}}w_{k}^{(\alpha)}u_{i+k-1}^{n+1}\right)
\\&=u_{i}^{n}+
 \frac{\tau}{2h^{\alpha}}\left(d_{+,i}\sum_{k=0}^{{i}}w_{k}^{(\alpha)}u_{i-k+1}^{n}
 +d_{-,i}\sum_{k=0}^{{M-i}}w_{k}^{(\alpha)}u_{i+k-1}^{n}\right)
 +\tau f_{i}^{n+\frac{1}{2}},\label{CN-1}
\end{align}

Denote $\nu_{\tau,h,\alpha}=\frac{\tau}{2 h^{\alpha}}$, $u^n=[u_1^n,u_2^n,\ldots,u_{M-1}^n]^T$ and
$f^{n+\frac{1}{2}}=[f_1^{n+\frac{1}{2}},f_2^{n+\frac{1}{2}},\ldots,f_{M-1}^{n+\frac{1}{2}}]^T$.
 Let
\begin{eqnarray}\label{W}
W_\alpha=\left[
    \begin{array}{ccccc}
       w_1^{(\alpha)}& w_0^{(\alpha)} & 0 & \cdots &  0 \\
       w_2^{(\alpha)} & w_1^{(\alpha)} & w_0^{(\alpha)} & \ddots & \vdots \\
      \vdots & w_2^{(\alpha)} & w_1^{(\alpha)} & \ddots & 0\\
      \vdots & \ddots & \ddots & \ddots & w_0^{(\alpha)} \\
      w_{M-1}^{(\alpha)}  & \cdots  & \cdots & w_2^{(\alpha)} & w_1^{(\alpha)} \\
    \end{array}
\right],
\end{eqnarray}
 where  $\{w_k^{(\alpha)}\}_{k=0}^{M-1}$ are the coefficients given in \eqref{w_{k}(1,0)}. Denote $A=D_{+}W_\alpha+D_{-}W_\alpha^{T}$, where matrices $D_+$, $D_-$ are defined as
\begin{eqnarray}\label{D}
D_+=\left[
    \begin{array}{cccc}
       d_{+,1}& & & \\
       & d_{+,2}&& \\
       && \ddots & \\
       &&&d_{+,M-1}\\
    \end{array}
\right],
\quad
D_-=\left[
    \begin{array}{cccc}
       d_{-,1}& & & \\
       & d_{-,2}&& \\
       && \ddots & \\
       &&&d_{-,M-1}\\
    \end{array}
\right].
\end{eqnarray}

Then the scheme \eqref{CN-1} can be expressed in the following matrix form
\begin{equation}\label{Matrixform}
\left(I-\nu_{\tau,h,\alpha}A \right)u^{n+1}=\left(I+\nu_{\tau,h,\alpha}A \right)u^{n}+\tau f^{n+\frac{1}{2}},
\quad n=0,1,\ldots,N-1,
\end{equation}
with $I$ being the identity matrix.

 Before moving to the analysis in the next subsection, we first point out the following remark,
 which gives one of the motivations of this paper.

\begin{remark}\label{exampleD}
 For general variable coefficients $d_+(x)$ and $d_-(x)$,
 the eigenvalues of the differentiation matrix $A=D_+W_\alpha+D_-W_\alpha^T$
 may have positive real part
 while $\alpha$ close to $1$. For example,
 we consider $M=4$,
\begin{eqnarray}\nonumber
D_+=\left[
    \begin{array}{ccc}
       3& &  \\
       &1/2& \\
       &&\sqrt{3}\\
    \end{array}
\right],
\qquad \mbox{and}\quad
D_-=\left[
    \begin{array}{ccc}
       1& &  \\
       &1& \\
       &&\sqrt{3}\\
    \end{array}
\right].
\end{eqnarray}
 when $\alpha=1.1$, 
 one can check that  the real part of the eigenvalues of the matrix $A$ are:
$$\Re(\lambda_1)=0.1801>0, \quad \Re(\lambda_2)=-1.0706,\quad \Re(\lambda_3)=-0.499.$$
 Therefore $A$  fails to fulfill the key property (see \cite{Deng_Secondorder} and  \cite{Deng_Fourthorder} for example)
 for establishing stability of the corresponding schemes.
 We also note that there are similar examples \cite{VongNACO} for another second order discretization developed in \cite{SousaANM,SousaCMA}.
 From all these examples, it seems not easy to theoretically consider high order schemes for \eqref{fde} in general
 when the diffusion coefficients  are not propositional to each
  other.

\end{remark}

\section{Theoretical Analysis}\label{1dsc}
First, we introduce the background knowledge of Toeplitz matrix and the generating function. A matrix that has the following form
 \begin{eqnarray}\nonumber
T_n=\left[
    \begin{array}{ccccc}
       t_0& t_{-1} & \cdots &  t_{2-n} & t_{1-n} \\
       t_1& t_0 & t_{-1} & \cdots &  t_{2-n}  \\
      \vdots & t_1 & t_0 & \ddots & \vdots\\
      t_{n-2}& \cdots & \ddots & \ddots &  t_{-1}\\
      t_{n-1}& t_{n-2}& \cdots &  t_1   & t_0 \\
    \end{array}
\right],
\end{eqnarray}
is called a $n$-by-$n$ Toeplitz matrix \cite{Jin}, where $t_{ij}=t_{i-j}$, i.e., the entries of $T_n$ are constant along each diagonal. If the diagonals
of $T_n$ are the Fourier coefficients of a function $f$:
$$t_k(f)=\frac1{2\pi}\int_{-\pi}^{\pi}f(\eta)e^{-{\bf i}k\eta}d \eta,$$
then the function $f$ is called the generating function of $T_n$.

 Noticing Remark \ref{exampleD}, we will study some conditions, under which
 the  scheme for \eqref{fde} can be analyzed.
 The condition depends on a constant related to some geometric property for the generating function
 \cite{Jin} of the Toeplitz matrix $W_\alpha$. Namely, we have the following:
\begin{theorem}\label{generatingfunction_rario}
 Let $g(\alpha,x)$ be the generating function of  $W_\alpha$ defined in \eqref{W}. 
 Then it holds that
$$\varsigma_\alpha\triangleq\min_{x}\frac{\Re[-g(\alpha,x)]}{|g(\alpha,x)|}=|\cos(\frac{\alpha}{2}\pi)|,$$
where $\Re[g(\alpha,x)]$ denotes the real part of $g(\alpha,x)$.
\end{theorem}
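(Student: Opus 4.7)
The plan is to obtain a closed form for $g(\alpha,x)$, express it in polar coordinates, and reduce the claim to a one-variable calculus inequality.

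First, the Toeplitz pattern of $W_\alpha$ places $w_{k+1}^{(\alpha)}$ on its $k$-th subdiagonal (for $k\geq -1$), so $g(\alpha,x)=e^{-ix}\sum_{k\geq 0}w_k^{(\alpha)}e^{ikx}$. Substituting \eqref{w_{k}(1,0)} and using the identity $\sum_k g_k^{(\alpha)} z^k=(1-z)^\alpha$ with $z=e^{ix}$ yields
\[
g(\alpha,x)=e^{-ix}\left(\frac{\alpha}{2}+\frac{2-\alpha}{2}e^{ix}\right)(1-e^{ix})^\alpha = h(x)\,[2\sin(x/2)]^\alpha\,e^{-i[(1-\alpha/2)x+\alpha\pi/2]}
\]
for $x\in(0,2\pi)$, where $h(x):=\frac{\alpha}{2}+\frac{2-\alpha}{2}e^{ix}$ and the second equality uses the principal-branch representation $1-e^{ix}=2\sin(x/2)\,e^{i(x/2-\pi/2)}$.

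Since $\Re[-g]/|g|=-\cos(\arg g)$, the claim is equivalent to $|\arg g(\alpha,x)|\geq \alpha\pi/2$ for all $x$, with the bound attained (as an infimum). The conjugate symmetry $\overline{g(\alpha,x)}=g(\alpha,-x)$ lets me restrict to $x\in(0,\pi]$. From the polar form,
\[
\arg g(\alpha,x)=\arg h(x)-(1-\alpha/2)x-\alpha\pi/2,
\]
and one checks directly that $\arg g(\alpha,\pi)=-\pi$, so $\arg g$ will lie in $[-\pi,-\alpha\pi/2]$ on $(0,\pi]$ once I prove the key inequality $\arg h(x)\leq (1-\alpha/2)x$ for $x\in(0,\pi]$.

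For that inequality, put $a=\alpha/2>1/2$ and $b=1-a=(2-\alpha)/2$, so that $\arg h(x)=\arctan\frac{b\sin x}{a+b\cos x}$. Define $f(x)=bx-\arg h(x)$; then $f(0)=0$ and a direct simplification of the derivative collapses to
\[
f'(x)=\frac{a\,b\,(2a-1)\,(1-\cos x)}{|h(x)|^2},
\]
which is nonnegative precisely because $\alpha>1$, i.e.\ $2a-1>0$. Hence $f\geq 0$ on $[0,\pi]$, which is exactly the required inequality. Sharpness then follows from the limit $x\to 0^+$: $\arg h(x)\to 0$, $\arg g(\alpha,x)\to -\alpha\pi/2$, and so $-\cos(\arg g)\to |\cos(\alpha\pi/2)|$.

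The main obstacle I anticipate is recognizing that the sign of $f'$ hinges entirely on the factor $(2a-1)$: this is the analytic fingerprint of the paper's standing assumption $\alpha>1$, and without it the simplification would flip sign and $\arg h(x)$ could overshoot $(1-\alpha/2)x$, derailing the whole reduction. Once the algebraic identity for $f'$ is in hand, monotonicity plus the $x\to 0^+$ limit completes the proof.
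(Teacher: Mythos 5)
Your proof is correct and follows essentially the same route as the paper: the same closed form $g(\alpha,x)=\big(\frac{\alpha}{2}e^{-{\bf i}x}+\frac{2-\alpha}{2}\big)(1-e^{{\bf i}x})^\alpha$, the same reduction to $x\in(0,\pi]$ by conjugate symmetry, and a monotonicity argument whose derivative has the identical numerator $\frac{\alpha}{2}\cdot\frac{2-\alpha}{2}\cdot(\alpha-1)(1-\cos x)$ over a positive square, with the extremal value read off in the limit $x\to 0^+$ and the endpoint $x=\pi$ checked directly. The only difference is cosmetic: you track $\arg g$ (via the factor $h(x)$) while the paper tracks $\Im[g]/\Re[g]=\tan(\arg g)$, and these monotonicity statements are equivalent because $\arg g$ stays in $[-\pi,-\pi/2)$ where $\tan$ is increasing.
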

\begin{proof}
For simplicity of presentation, we use $\Re[g]$ to denote $\Re[g(\alpha,x)]$. Referring to Theorem 2 in \cite{Deng_Secondorder}, we have $\Re[-g]\ge0$ ($1<\alpha<2$). 
For $x\in[0,\pi]$, with the coefficients $w_k^{(\alpha)}$ given by \eqref{w_{k}(1,0)}, we have
\begin{align*}
g(\alpha,x)=&\sum_{k=0}^{\infty}w_k^{(\alpha)}e^{{\bf i}(k-1)x}=\frac{\alpha}{2}e^{-{\bf i}x}\sum_{k=0}^{\infty}
g_k^{(\alpha)}e^{{\bf i}kx}+\frac{2-\alpha}{2}\sum_{k=0}^{\infty}g_k^{(\alpha)}e^{{\bf i}kx}\\
=&\frac{\alpha}{2}e^{-{\bf i}x}(1-e^{{\bf i}x})^\alpha+\frac{2-\alpha}{2}(1-e^{{\bf i}x})^\alpha\\
=&\big(2\sin(\frac{x}{2}) \big)^\alpha \Big[\frac{\alpha}{2}\cos\big(\frac{\alpha}{2}(x-\pi)-x\big)
+\frac{2-\alpha}{2}\cos\big(\frac{\alpha}{2}(x-\pi)\big) \\
&+{\bf i}\Big( \frac{\alpha}{2}\sin\big(\frac{\alpha}{2}(x-\pi)-x\big)
+\frac{2-\alpha}{2}\sin\big(\frac{\alpha}{2}(x-\pi)\big) \Big) \Big].
\end{align*}
Let $\Im[g]$ be the imaginary part of $g(\alpha,x)$, we have
$$\frac{\Im[g]}{\Re[g]}=\frac{\frac{\alpha}{2}\sin\big(\frac{\alpha}{2}(x-\pi)-x\big)
+\frac{2-\alpha}{2}\sin\big(\frac{\alpha}{2}(x-\pi)\big)}{\frac{\alpha}{2}\cos\big(\frac{\alpha}{2}(x-\pi)-x\big)
+\frac{2-\alpha}{2}\cos\big(\frac{\alpha}{2}(x-\pi)\big)},$$
and then
$$\bigg(\frac{\Im[g]}{\Re[g]}\bigg)_x=\frac{\frac{\alpha}{2}(1-\frac{\alpha}{2})(\alpha-1)\big(\cos(x)-1\big)}{\Big[\frac{\alpha}{2}\cos\big(\frac{\alpha}{2}(x-\pi)-x\big)
+\frac{2-\alpha}{2}\cos\big(\frac{\alpha}{2}(x-\pi)\big)
\Big]^2}\leq 0.$$ It implies that $\frac{\Im[g]}{\Re[g]}$ is a
decreasing function on $[0,\pi]$. Note that
$\Big(\frac{\Im[g]}{\Re[g]}\Big)\Big|_{x=\pi}=0$. As a result
$$\max_{x}\bigg|\frac{\Im[g]}{\Re[g]} \bigg|=\bigg(\frac{\Im[g]}{\Re[g]}\bigg)\bigg|_{x=0}=\tan(-\frac{\alpha}{2}\pi)\triangleq\rho_\alpha\quad \mbox{for}\quad x\in[0,\pi].$$
Similarly, when $x\in[-\pi,0]$, we have
\begin{align*}
g(\alpha,x)=&\big(2\sin(\frac{-x}{2}) \big)^\alpha \Big[\frac{\alpha}{2}\cos\big(\frac{\alpha}{2}(x+\pi)-x\big)
+\frac{2-\alpha}{2}\cos\big(\frac{\alpha}{2}(x+\pi)\big) \\
&{+}{\bf i}\Big( \frac{\alpha}{2}\sin\big(\frac{\alpha}{2}(x+\pi)-x\big)
+\frac{2-\alpha}{2}\sin\big(\frac{\alpha}{2}(x+\pi)\big) \Big) \Big],
\end{align*}
and
$$\bigg(\frac{\Im[g]}{\Re[g]}\bigg)_x=\frac{\frac{\alpha}{2}(1-\frac{\alpha}{2})(\alpha-1)\big(\cos(x)-1\big)}{\Big[\frac{\alpha}{2}\cos\big(\frac{\alpha}{2}(x+\pi)-x\big)
+\frac{2-\alpha}{2}\cos\big(\frac{\alpha}{2}(x+\pi)\big) \Big]^2}\leq 0.$$
 This implies that $\frac{\Im[g]}{\Re[g]}$ is still a decreasing function on $[-\pi,0]$.
 Note that $\Big(\frac{\Im[g]}{\Re[g]}\Big)\Big|_{x=-\pi}=0$.
 Therefore
$$\max_{x}\bigg|\frac{\Im[g]}{\Re[g]} \bigg|=-\bigg(\frac{\Im[g]}{\Re[g]}\bigg)\bigg|_{x=0}=\rho_\alpha\quad \mbox{for}\quad x\in[-\pi,0].$$
Consequently,
\begin{align*}
\min_{x}\frac{\Re[-g]}{|g(\alpha,x)|}=\frac{1}{\sqrt{1+
\rho_\alpha^2}}=|\cos(\frac{\alpha}{2}\pi)|\quad \mbox{for}\quad x\in[-\pi,\pi].
\end{align*}
This completes the proof.
\end{proof}

Referring to \cite{Jin}, we easily have the following lemma.
\begin{lemma}\label{JIN}
Let ${\bf u}=[u_1,u_2,\ldots,u_{M-1}]^T,{\bf v}=[v_1,v_2,\ldots,v_{M-1}]^T\in \mathbb{R}^{M-1}$. Then we have
$${\bf u}^TW_\alpha {\bf v}=\frac{1}{2\pi}\int_{-\pi}^\pi\sum_{k=1}^{M-1}{ u}_ke^{-{\bf i}kx}\sum_{k=1}^{M-1}{ v}_ke^{{\bf i}kx}g(\alpha,x)dx.$$
\end{lemma}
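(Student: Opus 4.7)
The plan is to prove this by a direct Fourier-coefficient computation that exploits the Toeplitz structure of $W_\alpha$ and the definition of its generating function. First I would expand the left-hand side coordinatewise. Reading off the entries of $W_\alpha$ from \eqref{W}, one has $(W_\alpha)_{ij} = w_{i-j+1}^{(\alpha)}$ whenever $i - j + 1 \geq 0$, and $0$ otherwise. Thus
$$
{\bf u}^T W_\alpha {\bf v} \;=\; \sum_{i=1}^{M-1}\sum_{j=1}^{M-1} u_i\, w_{i-j+1}^{(\alpha)}\, v_j,
$$
with the understanding that $w_k^{(\alpha)} = 0$ for $k<0$ (which kills the terms above the superdiagonal).

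Next I would manipulate the right-hand side. Exchanging finite sums and the integral,
$$
\frac{1}{2\pi}\int_{-\pi}^\pi \sum_{k=1}^{M-1}u_k e^{-{\bf i}kx}\sum_{\ell=1}^{M-1}v_\ell e^{{\bf i}\ell x}g(\alpha,x)\,dx
=\sum_{k,\ell=1}^{M-1} u_k v_\ell \cdot \frac{1}{2\pi}\int_{-\pi}^\pi e^{-{\bf i}(k-\ell)x}g(\alpha,x)\,dx.
$$
The inner integral is, by definition, the $(k-\ell)$-th Fourier coefficient $t_{k-\ell}(g)$ of the generating function $g(\alpha,\cdot)$.

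The crux is then to identify these Fourier coefficients with the entries $w_{k-\ell+1}^{(\alpha)}$. Using the series representation already derived in the proof of Theorem \ref{generatingfunction_rario},
$$
g(\alpha,x)=\sum_{m=0}^{\infty} w_m^{(\alpha)} e^{{\bf i}(m-1)x},
$$
orthogonality of $\{e^{{\bf i}nx}\}_{n\in\mathbb{Z}}$ on $[-\pi,\pi]$ gives
$$
\frac{1}{2\pi}\int_{-\pi}^\pi e^{-{\bf i}(k-\ell)x}g(\alpha,x)\,dx
=\begin{cases} w_{k-\ell+1}^{(\alpha)}, & k-\ell+1 \geq 0,\\ 0, & k-\ell+1 < 0.\end{cases}
$$
In either case this agrees with $(W_\alpha)_{k\ell}$, so the right-hand side collapses to the expression for ${\bf u}^T W_\alpha {\bf v}$ obtained in the first step.

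I expect no serious obstacle here; the only delicate point is the index bookkeeping introduced by the shift factor $e^{-{\bf i}x}$ in $g(\alpha,x)$ (which is why the matrix entries come out as $w_{i-j+1}^{(\alpha)}$ rather than $w_{i-j}^{(\alpha)}$) and the careful handling of the indices $k-\ell+1 < 0$ to make sure they match the zero entries of $W_\alpha$ in its strict upper triangle above the superdiagonal. Once these conventions are fixed, the proof is two lines of Fourier orthogonality.
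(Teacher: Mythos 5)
Your proof is correct and is exactly the standard argument the paper delegates to \cite{Jin} (the paper itself writes no proof): read off the Toeplitz entries $(W_\alpha)_{k\ell}=w_{k-\ell+1}^{(\alpha)}$, zero when $k-\ell+1<0$, and identify them with the Fourier coefficients of $g(\alpha,\cdot)$ by orthogonality. The only detail worth adding is that termwise integration of the series $g(\alpha,x)=\sum_{m\ge 0}w_m^{(\alpha)}e^{{\bf i}(m-1)x}$ is legitimate because the $w_m^{(\alpha)}$ are absolutely summable (since $g_m^{(\alpha)}=O(m^{-\alpha-1})$), but that is a one-line remark, not a gap.
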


\begin{lemma}\label{w_k_property}(\cite{Deng_Secondorder})
The coefficients in \eqref{w_{k}(1,0)} satisfy the following properties for $1< \alpha\leq 2$,
\begin{equation}\nonumber
\left\{
\begin{array}{l}
w_0^{(\alpha)}=\frac{\alpha}{2},~w_1^{(\alpha)}<0,~w_2^{(\alpha)}=\frac{\alpha(\alpha^2+\alpha-4)}{4},\\
1\geq w_0^{(\alpha)}\geq w_3^{(\alpha)}\geq w_4^{(\alpha)}\geq \ldots \geq 0,\\
\sum_{k=0}^\infty w_k^{(\alpha)}=0,~ \sum_{k=0}^m w_k^{(\alpha)}<0,~m\geq 2.\\
\end{array}
\right.
\end{equation}
\end{lemma}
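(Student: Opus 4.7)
My plan is to work directly from the recursive definition of the generalized binomial coefficients $g_k^{(\alpha)}$ in \eqref{gkalpha} and the convex-combination formula \eqref{w_{k}(1,0)}. The closed forms for $w_0^{(\alpha)}$, $w_1^{(\alpha)}$, $w_2^{(\alpha)}$ follow immediately by plugging in $g_0^{(\alpha)}=1$, $g_1^{(\alpha)}=-\alpha$, $g_2^{(\alpha)}=\alpha(\alpha-1)/2$. In particular I would factor $w_1^{(\alpha)}=-(\alpha-1)(\alpha+2)/2$, which is manifestly negative on $\alpha\in(1,2]$; this takes care of the first line of the lemma.

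For the monotone chain $1\geq w_0^{(\alpha)}\geq w_3^{(\alpha)}\geq w_4^{(\alpha)}\geq\cdots\geq 0$ I would first extract, from the recursion, the ratio $g_k^{(\alpha)}/g_{k-1}^{(\alpha)}=(k-\alpha-1)/k$. Since $\alpha\in(1,2]$, this ratio is positive and strictly less than $1$ for all $k\geq 3$, while $g_2^{(\alpha)}>0$. Hence $g_k^{(\alpha)}>0$ for $k\geq 2$ and the sequence $\{g_k^{(\alpha)}\}_{k\geq 2}$ is strictly decreasing. Because $w_k^{(\alpha)}$ is a nonnegative combination of $g_k^{(\alpha)}$ and $g_{k-1}^{(\alpha)}$, positivity $w_k^{(\alpha)}\geq 0$ for $k\geq 3$ and the decrease $w_k^{(\alpha)}>w_{k+1}^{(\alpha)}$ for $k\geq 3$ follow by termwise comparison. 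The bound $w_0^{(\alpha)}=\alpha/2\leq 1$ is trivial. The remaining inequality $w_0^{(\alpha)}\geq w_3^{(\alpha)}$ is the one step that is not pure bookkeeping: I would compute $w_3^{(\alpha)}=\alpha(\alpha-1)(2-\alpha)(\alpha+3)/12$ and reduce the desired inequality to $(\alpha-1)(2-\alpha)(\alpha+3)\leq 6$; this follows from $(\alpha-1)(2-\alpha)\leq 1/4$ (AM--GM at $\alpha=3/2$) and $(\alpha+3)\leq 5$, so the product is $\leq 5/4<6$.

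For $\sum_{k=0}^{\infty}w_k^{(\alpha)}=0$ I would observe that summing \eqref{w_{k}(1,0)} gives
\[
\sum_{k=0}^{\infty}w_k^{(\alpha)}=\Bigl(\frac{\alpha}{2}+\frac{2-\alpha}{2}\Bigr)\sum_{k=0}^{\infty}g_k^{(\alpha)}=\sum_{k=0}^{\infty}g_k^{(\alpha)},
\]
and then invoke the Abel limit $\sum_k g_k^{(\alpha)}z^k=(1-z)^\alpha$ at $z=1$, which vanishes for $\alpha>0$ (absolute convergence holds because $g_k^{(\alpha)}=\mathcal{O}(k^{-\alpha-1})$ for $k\geq 2$). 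The final claim $\sum_{k=0}^{m}w_k^{(\alpha)}<0$ for $m\geq 2$ is then a one-liner: the tail $\sum_{k=m+1}^{\infty}w_k^{(\alpha)}$ has all strictly positive terms (since $m+1\geq 3$), so by the vanishing of the full sum the partial sum through $m$ is its negative, hence strictly negative.

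The only genuine obstacle is the inequality $w_0^{(\alpha)}\geq w_3^{(\alpha)}$: it does not come from monotonicity of $\{g_k^{(\alpha)}\}$ alone because $w_0^{(\alpha)}$ is built from $g_0^{(\alpha)}$ only, while $w_3^{(\alpha)}$ mixes $g_2^{(\alpha)}$ and $g_3^{(\alpha)}$, so the explicit polynomial bound above is needed. Everything else reduces to tracking the signs and ratios coming out of the recursion \eqref{gkalpha}.
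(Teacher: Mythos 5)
Your proposal is correct, but note that the paper itself offers no proof of this lemma: it is imported verbatim from the cited reference \cite{Deng_Secondorder}, so there is nothing in the present text to compare against line by line. Your self-contained derivation checks out: the closed forms $w_1^{(\alpha)}=-(\alpha-1)(\alpha+2)/2$ and $w_3^{(\alpha)}=\alpha(\alpha-1)(2-\alpha)(\alpha+3)/12$ are correct, the ratio $g_k^{(\alpha)}/g_{k-1}^{(\alpha)}=(k-\alpha-1)/k\in[0,1)$ for $k\geq 3$ does give nonnegativity and monotone decrease of $\{g_k^{(\alpha)}\}_{k\geq 2}$ and hence of $\{w_k^{(\alpha)}\}_{k\geq 3}$ by termwise comparison, and you correctly isolate $w_0^{(\alpha)}\geq w_3^{(\alpha)}$ as the one step needing an explicit bound (your reduction to $(\alpha-1)(2-\alpha)(\alpha+3)\leq 5/4<6$ is fine). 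The Abel-summation argument for $\sum_k w_k^{(\alpha)}=\sum_k g_k^{(\alpha)}=(1-z)^\alpha\big|_{z=1}=0$, with absolute convergence from $g_k^{(\alpha)}=\mathcal{O}(k^{-\alpha-1})$, is also standard and correct. One small caveat: at the endpoint $\alpha=2$ one has $g_k^{(\alpha)}=0$ for $k\geq 3$, so the tail terms vanish and $\sum_{k=0}^{m}w_k^{(\alpha)}=0$ rather than $<0$; the strict inequality in the last line of the lemma (and your phrase ``all strictly positive terms'') therefore only holds for $1<\alpha<2$. This is a defect of the statement as quoted rather than of your argument, and it is immaterial here since the paper works exclusively with $\alpha\in(1,2)$.
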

 One of the key steps in our analysis is to bound the field of values of
 $DWD$ by those of $W$, where $W=-W_\alpha-W_\alpha^T$ and $D$ is a diagonal matrix. This may not hold in general,
 in the followings, we established this relation for a special case.
 \begin{lemma}\label{key_inequality1}
 Denote $W=-W_\alpha-W_\alpha^T$.
 Suppose that $D$ is a diagonal matrix taking values $d_i$ of a function $d(x)$ at grid points $x_i~(1\le i\le M-1)$.
 For any ${\bf u}=[u_1,u_2,\ldots,u_{M-1}]^T$, we have
\begin{align}\label{key_inequality}
{\bf u}^TDWD{\bf u}\leq 2\max_i\{|d_i|^2\}{\bf u}^TW{\bf u},
\end{align}
if $d(x)$ is convex and $d(x)\geq0$, or $d(x)$ is concave and $d(x)\leq0$.
\end{lemma}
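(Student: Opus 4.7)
The plan is to combine the Toeplitz generating-function machinery from Lemma \ref{JIN} with the convexity (resp.\ concavity) hypothesis on $d$ to control the quadratic form ${\bf u}^TDWD{\bf u}$ in terms of ${\bf u}^TW{\bf u}$.

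First step: translate to the Fourier side. Setting ${\bf v}:=D{\bf u}$ so that ${\bf u}^TDWD{\bf u}={\bf v}^TW{\bf v}$, and applying Lemma \ref{JIN} to $W=-(W_\alpha+W_\alpha^T)$ together with the reality of these quadratic forms, both sides can be expressed as Fourier integrals
\[{\bf u}^TW{\bf u}=\frac{1}{\pi}\int_{-\pi}^{\pi}\Re[-g(\alpha,x)]\,|\hat u(x)|^2\,dx,\qquad {\bf v}^TW{\bf v}=\frac{1}{\pi}\int_{-\pi}^{\pi}\Re[-g(\alpha,x)]\,|\hat v(x)|^2\,dx,\]
with $\hat u(x):=\sum_k u_ke^{-{\bf i}kx}$, $\hat v(x):=\sum_k d_ku_ke^{-{\bf i}kx}$, and $\Re[-g(\alpha,x)]\ge 0$ by Theorem \ref{generatingfunction_rario}. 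The inequality \eqref{key_inequality} is then equivalent to the weighted Fourier estimate
\[\int_{-\pi}^{\pi}\Re[-g(\alpha,x)]\,|\hat v(x)|^2\,dx\ \le\ 2C^2\int_{-\pi}^{\pi}\Re[-g(\alpha,x)]\,|\hat u(x)|^2\,dx,\qquad C:=\max_i|d_i|.\]

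Second step: exploit convexity. The factor $2$ on the right-hand side suggests a splitting $\hat v=\hat v_1+\hat v_2$ followed by $(a+b)^2\le 2a^2+2b^2$. A natural choice is to decompose $d_k$ into an affine reference and a curvature-carrying remainder; for convex $d\ge 0$ I would write $d_k=\ell_k-e_k$ with $\ell$ the affine secant of $d$ through the endpoint nodes, so that $e_k\ge 0$ by convexity and $\ell_k\le C$. Substituting yields $|\hat v|^2\le 2|\widehat{L{\bf u}}|^2+2|\widehat{E{\bf u}}|^2$, and after integrating against the nonnegative weight the problem reduces to controlling (i) ${\bf u}^TLWL{\bf u}$ for an affine $L=\diag(\ell_k)$, which should be bounded by $C^2{\bf u}^TW{\bf u}$ via a direct Fourier/shift computation since $\ell_k\le C$, and (ii) the curvature piece ${\bf u}^TEWE{\bf u}$, which must be shown to be nonpositive or absorbable into the right-hand side using the discrete second-difference inequality $e_{k+1}-2e_k+e_{k-1}\le 0$ coming from the convexity of $d$.

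The hard part will be step (ii): matching the discrete convexity of $d$ with the Toeplitz sign pattern of $W$. By Lemma \ref{w_k_property}, the diagonal of $W$ is positive ($-2w_1^{(\alpha)}>0$) and the far off-diagonals are nonpositive ($-w_{|i-j|+1}^{(\alpha)}\le 0$ for $|i-j|\ge 2$), but the nearest off-diagonals $-w_0^{(\alpha)}-w_2^{(\alpha)}$ can change sign as $\alpha$ traverses $(1,2)$. I would expect the proof to perform an Abel-type summation-by-parts that transfers the discrete second difference of $d$ onto ${\bf u}$, matching it against a second-difference structure latent in the Toeplitz entries $w_k^{(\alpha)}$; the delicate sign-tracking in the nearest off-diagonals is likely absorbed uniformly via the sharp ratio bound $\Re[-g(\alpha,x)]\ge|\cos(\frac{\alpha}{2}\pi)||g(\alpha,x)|$ from Theorem \ref{generatingfunction_rario}. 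Finally, the concave-nonpositive case should reduce to the convex-nonnegative one via the substitution $d\mapsto -d$, which leaves both $DWD$ and $\max_i|d_i|$ invariant.
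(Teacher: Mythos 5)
Your plan diverges from the paper's proof and, more importantly, has a genuine gap at its core. The paper does not work on the Fourier side at all for this lemma: it forms the symmetric matrix $2d^\star W-DWD$ with $d^\star=\max_i|d_i|^2$, observes from Lemma \ref{w_k_property} that its diagonal entries $(4d^\star-2d_m^2)\big|w_1^{(\alpha)}\big|$ are positive and its off-diagonal entries are nonpositive (since $w_0^{(\alpha)}+w_2^{(\alpha)}>0$, $w_k^{(\alpha)}\ge 0$ for $k\ge 3$, and $d_id_j\le 2d^\star$), and then proves row-by-row diagonal dominance using exactly two elementary inequalities: $2d^\star-2d_m^2+d_md_k\ge 0$ (which needs all $d_i$ of one sign) and $d_m(d_{m-j}+d_{m+j}-2d_m)\ge 0$ (which is where convex-nonnegative or concave-nonpositive enters), combined with $\big|w_1^{(\alpha)}\big|>\sum_{k\ne 1}w_k^{(\alpha)}$. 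Gershgorin then gives positive semidefiniteness of $2d^\star W-DWD$, which is precisely \eqref{key_inequality}. The factor $2$ is the slack needed for the pairing $d_md_{m-j}+d_md_{m+j}$ against $2d_m^2$, not a consequence of $(a+b)^2\le 2a^2+2b^2$.

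The concrete gap in your proposal is step (i): the claim that for an affine diagonal $L=\diag(\ell_k)$ with $0\le\ell_k\le C$ one has ${\bf u}^TLWL{\bf u}\le C^2{\bf u}^TW{\bf u}$ "via a direct Fourier/shift computation." For a positive semidefinite $W$ and a diagonal $L$ that does not commute with $W$, $\|L\|\le C$ does \emph{not} imply $LWL\preceq C^2W$ (take $W$ of rank one to see this fail); on the Fourier side, multiplying $u_k$ by $k$ turns $\hat u(x)$ into a derivative $\hat u'(x)$, and there is no bound of $\int\Re[-g]\,|\hat u'|^2$ by $\int\Re[-g]\,|\hat u|^2$. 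So the affine case is essentially as hard as the lemma itself, and your decomposition pushes the entire difficulty into a step you declare easy. Step (ii) is explicitly left as a hope (an unsupplied Abel-summation argument), so the proof is not completed there either. Your final observation, that the concave-nonpositive case reduces to the convex-nonnegative one via $d\mapsto-d$, is correct and consistent with the paper, which handles both cases simultaneously through the sign of $d_m(d_{m-j}+d_{m+j}-2d_m)$.
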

\begin{proof}
We have
\begin{eqnarray}\nonumber
W=\left[
    \begin{array}{ccccc}
       2\big|w_1^{(\alpha)}\big|& -(w_0^{(\alpha)}+w_2^{(\alpha)}) & -w_3^{(\alpha)} & \cdots &  -w_{M-1}^{(\alpha)} \\
       -(w_0^{(\alpha)}+w_2^{(\alpha)})& 2\big|w_1^{(\alpha)}\big| & -(w_0^{(\alpha)}+w_2^{(\alpha)}) & \ddots &  -w_{M-2}^{(\alpha)} \\
      -w_{3}^{(\alpha)} & -(w_0^{(\alpha)}+w_2^{(\alpha)}) & 2\big|w_1^{(\alpha)}\big| & \ddots & \vdots\\
      \vdots & \ddots & \ddots & \ddots & \vdots\\
      -w_{M-1}^{(\alpha)}  & \cdots  & \cdots & \cdots & 2\big|w_1^{(\alpha)}\big| \\
    \end{array}
\right],
\end{eqnarray}
\begin{eqnarray}\nonumber
D=\left[
    \begin{array}{cccc}
       d_{1}& & & \\
       & d_{2}&& \\
       && \ddots & \\
       &&&d_{M-1}\\
    \end{array}
\right],
\end{eqnarray}
and
{\small{\begin{eqnarray}\nonumber
DWD=\left[
    \begin{array}{ccccc}
       2d_1^2\big|w_1^{(\alpha)}\big|& -d_1d_2(w_0^{(\alpha)}+w_2^{(\alpha)}) & -d_1d_3w_3^{(\alpha)} & \cdots &  -d_1d_{M-1}w_{M-1}^{(\alpha)} \\
       -d_2d_1(w_0^{(\alpha)}+w_2^{(\alpha)})& 2d_2^2\big|w_1^{(\alpha)}\big| & -d_2d_3(w_0^{(\alpha)}+w_2^{(\alpha)}) & \ddots &  -d_2d_{M-1}w_{M-2}^{(\alpha)} \\
      -d_3d_1w_3^{(\alpha)} & -d_3d_2(w_0^{(\alpha)}+w_2^{(\alpha)}) & 2d_3^2\big|w_1^{(\alpha)}\big| & \ddots & \vdots\\
      \vdots & \ddots & \ddots & \ddots &\vdots\\
      -d_{M-1}d_1w_{M-1}^{(\alpha)}  & \cdots  & \cdots & \cdots & 2d_{M-1}^2\big|w_1^{(\alpha)}\big| \\
    \end{array}
\right].
\end{eqnarray}}}
Denote $d^\star={\max\limits_{i}}\{|d_i|^2\}$, then
$$2d^\star W-DWD=$$
{\scriptsize{\begin{eqnarray}\nonumber
\left[
    \begin{array}{ccccc}
       (4d^\star-2d_1^2)\big|w_1^{(\alpha)}\big|& (d_1d_2-2d^\star)(w_0^{(\alpha)}+w_2^{(\alpha)}) & (d_1d_3-2d^\star)w_3^{(\alpha)} & \cdots &  (d_1d_{M-1}-2d^\star)w_{M-1}^{(\alpha)} \\
       (d_2d_1-2d^\star)(w_0^{(\alpha)}+w_2^{(\alpha)})& (4d^\star-2d_2^2)\big|w_1^{(\alpha)}\big| & (d_2d_3-2d^\star)(w_0^{(\alpha)}+w_2^{(\alpha)}) & \ddots &  (d_2d_{M-1}-2d^\star)w_{M-2}^{(\alpha)} \\
      (d_3d_1-2d^\star)w_3^{(\alpha)} & (d_3d_2-2d^\star)(w_0^{(\alpha)}+w_2^{(\alpha)}) & (4d^\star-2d_3^2)\big|w_1^{(\alpha)}\big| & \ddots & \vdots\\
      \vdots & \ddots & \ddots & \ddots &\vdots\\
      (d_{M-1}d_1-2d^\star)w_{M-1}^{(\alpha)}  & \cdots  & \cdots & \cdots & (4d^\star-2d_{M-1}^2)\big|w_1^{(\alpha)}\big| \\
    \end{array}
\right].
\end{eqnarray}}}
From Lemma \ref{w_k_property}, we have the following properties:
\begin{align}\label{inequality_proof1}
w_0^{(\alpha)}+w_2^{(\alpha)}>0,\quad w_k^{(\alpha)}\geq0 ~(k\geq3)\quad \mbox{and}\quad \big|w_1^{(\alpha)}\big|>w_0^{(\alpha)}+w_2^{(\alpha)}+w_3^{(\alpha)}+\cdots+w_{M-1}^{(\alpha)}.
\end{align}
Note that $2d^\star W-DWD$ is a symmetric matrix with positive diagonal entries and nonpositive off-diagonal entries.
If the matrix is diagonally dominant, we can then conclude that it is positive definite by  Gershgorin circle theorem,
and  \eqref{key_inequality} follows.

 First we consider the top row of $2d^\star W-DWD$. If $d_i\geq0$ (i.e. $d(x)\geq0$) or $d_i\leq0$ (i.e. $d(x)\leq0$), it holds that
\begin{align}\label{inequality_proof2}
(4d^\star-2d_1^2)+(d_1d_k-2d^\star)=2d^\star-2d_1^2+d_1d_k\geq 0,\quad k\geq2.
\end{align}
Then we can get the first row of $2d^\star W-DWD$ is diagonally dominant from \eqref{inequality_proof1} and \eqref{inequality_proof2}.

 In general, for the $m$-th row with $2\leq m\leq\lfloor\frac{M}{2}\rfloor$, the sum of the magnitudes of off-diagonal entries is (if $m=2$, $w_m^{(\alpha)}$ is $(w_0^{(\alpha)}+w_2^{(\alpha)})$ in the following)
\begin{align*}
&(4d^\star-d_md_{m-1}-d_md_{m+1})(w_0^{(\alpha)}+w_2^{(\alpha)})+(4d^\star-d_md_{m-2}-d_md_{m+2})w_3^{(\alpha)}\\
&+\cdots+(4d^\star-d_md_1-d_md_{2m-1})w_m^{(\alpha)}+\sum_{k=m+1}^{M-m}(2d^\star-d_md_{k+1})w_k^{(\alpha)}.
\end{align*}
 On the other hand, the diagonal entry is
 $$(4d^\star-2d_m^2)\big|w_1^{(\alpha)}\big|>(4d^\star-2d_m^2)
 \left[(w_0^{(\alpha)}+w_2^{(\alpha)})+w_3^{(\alpha)}+\cdots+w_m^{(\alpha)}+\sum_{k=m+1}^{M-m}w_k^{(\alpha)}\right],$$
 Therefore, if  $d(x)$ is convex and nonnegative, or  concave and nonpositive, it follows that
 $$(4d^\star-2d_m^2)-(4d^\star-d_md_{m-j}-d_md_{m+j})=d_m(d_{m-j}+d_{m+j}-2d_m)\geq0,\quad 1\leq j\leq m-1,$$
 and
 $$(4d^\star-2d_m^2)-(2d^\star-d_md_{k+1})=2d^\star-2d_m^2+d_md_{k+1}\geq0,\quad m+1\leq k\leq M-m.$$
 These imply that the $m$-th $(2\leq m\leq\lfloor\frac{M}{2}\rfloor)$ row of matrix $2d^\star W-DWD$ is diagonally dominant.
 The analysis for $\lfloor\frac{M}{2}\rfloor+1\leq m\leq M-1$ is similar
 and we can then conclude that $2d^\star W-DWD$ is a diagonally dominant matrix
 if $d(x)$ satisfies the conditions stated in the lemma.
 This thus completes the proof.
\end{proof}

 We are now ready to introduce the condition mentioned in the beginning of this section.
 The following theorem shows that,
 if $d_+(x)$ and $d_-(x)$ satisfy this condition, the symmetric part ${\cal H}(A_{\alpha})$ of
 $A_\alpha=D_+^{-1}A$ (if $D_+$ invertible), or, $A_\alpha=D_-^{-1}A$ (if $D_-$ invertible)) is negative semidefinite. The condition depends on the ratio of $d_+(x)$ and $d_-(x)$. Denote ${\tilde d}(x)=\frac{d_-(x)}{d_+(x)}$, for simplicity, we only present the condition in terms of the assumption that
  $0\leq\kappa_{min}\leq {\tilde d}(x)\leq \kappa_{max}<\infty$ on $[x_L,x_R]$, that is, $A_\alpha=W_\alpha+{\tilde D}W^T_\alpha$ with ${\tilde D}=D_+^{-1}D_-$.
 Note that a similar statement holds for the assumption imposing on the ratio $d_+(x)/d_-(x)$. The theorem reads as:
\begin{theorem}\label{CONDITION}
 If the following condition holds
 \begin{equation}\label{cond}
 1+\kappa-\frac{\sqrt{2}(\kappa_{max}-\kappa_{min})}{\varsigma_\alpha}\geq0,
 \end{equation}
then the symmetric part ${\cal H}(A_\alpha)$ of $A_\alpha$ is negative semidefinite, where $\kappa=\kappa_{max}$ when ${\tilde d}(x)$ is concave, and $\kappa=\kappa_{min}$ when ${\tilde d}(x)$ is convex.
\end{theorem}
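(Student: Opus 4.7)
Since $\mathbf{u}^T A_\alpha \mathbf{u} = \mathbf{u}^T \mathcal{H}(A_\alpha)\mathbf{u}$ for every real $\mathbf{u}\in\mathbb{R}^{M-1}$, it suffices to show that $\mathbf{u}^T A_\alpha\mathbf{u}\le 0$. The plan is to split off the ``symmetric part of $\tilde D$'' and reduce the problem to bounding a cross term by the three preparatory results. Take $\kappa=\kappa_{min}$ when $\tilde d$ is convex and $\kappa=\kappa_{max}$ when $\tilde d$ is concave, and set $\tilde D' = \tilde D - \kappa I$; in either case the shifted coefficient function $\tilde d(x)-\kappa$ is convex--nonnegative or concave--nonpositive, so $\tilde D'$ falls inside the hypothesis of Lemma \ref{key_inequality1} with $\max_i|\tilde d_i-\kappa|=\kappa_{max}-\kappa_{min}$. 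Using that $\mathbf{u}^T W_\alpha^T\mathbf{u}=\mathbf{u}^T W_\alpha\mathbf{u}$ and writing $W=-W_\alpha-W_\alpha^T$, one obtains
\[
\mathbf{u}^T A_\alpha \mathbf{u} \;=\; (1+\kappa)\,\mathbf{u}^T W_\alpha\mathbf{u} + \mathbf{u}^T W_\alpha \tilde D'\mathbf{u} \;=\; -\tfrac{1+\kappa}{2}\,\mathbf{u}^T W\mathbf{u} + \mathbf{u}^T W_\alpha\tilde D'\mathbf{u}.
\]
Since $W$ is positive semidefinite (its quadratic form equals $\pi^{-1}\int|\hat u|^2\Re[-g]\,dx\ge 0$ by Lemma \ref{JIN} together with $\Re[-g]\ge 0$), everything reduces to showing the cross-term bound $\mathbf{u}^T W_\alpha\tilde D'\mathbf{u}\le\tfrac{1+\kappa}{2}\mathbf{u}^T W\mathbf{u}$.

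For the cross term I would combine Lemma \ref{JIN} with Cauchy--Schwarz to convert the pointwise information of Theorem \ref{generatingfunction_rario} into a quadratic-form inequality. Applying Lemma \ref{JIN} to $\mathbf{u}^T W_\alpha(\tilde D'\mathbf{u})$ and then Cauchy--Schwarz yields
\[
|\mathbf{u}^T W_\alpha\tilde D'\mathbf{u}| \le \Bigl(\tfrac{1}{2\pi}\int_{-\pi}^\pi|\hat u|^2|g(\alpha,x)|\,dx\Bigr)^{1/2}\Bigl(\tfrac{1}{2\pi}\int_{-\pi}^\pi|\widehat{\tilde D'\mathbf{u}}|^2|g(\alpha,x)|\,dx\Bigr)^{1/2}.
\]
Theorem \ref{generatingfunction_rario} furnishes the pointwise estimate $|g(\alpha,x)|\le\varsigma_\alpha^{-1}\Re[-g(\alpha,x)]$, so substitution into Lemma \ref{JIN} gives $(2\pi)^{-1}\int|\hat w|^2|g|\,dx\le(2\varsigma_\alpha)^{-1}\mathbf{w}^T W\mathbf{w}$ for any real $\mathbf{w}$. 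Taking $\mathbf{w}=\mathbf{u}$ in the first factor and $\mathbf{w}=\tilde D'\mathbf{u}$ in the second,
\[
|\mathbf{u}^T W_\alpha\tilde D'\mathbf{u}| \le \tfrac{1}{2\varsigma_\alpha}\sqrt{\mathbf{u}^T W\mathbf{u}\cdot\mathbf{u}^T\tilde D'W\tilde D'\mathbf{u}}.
\]

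The last factor is exactly where Lemma \ref{key_inequality1} is designed to intervene: with the prescribed choice of $\kappa$, it gives $\mathbf{u}^T\tilde D'W\tilde D'\mathbf{u}\le 2(\kappa_{max}-\kappa_{min})^2\mathbf{u}^T W\mathbf{u}$, whence
\[
|\mathbf{u}^T W_\alpha\tilde D'\mathbf{u}| \le \tfrac{\kappa_{max}-\kappa_{min}}{\sqrt{2}\,\varsigma_\alpha}\,\mathbf{u}^T W\mathbf{u}.
\]
Plugging back into the decomposition above yields
\[
\mathbf{u}^T A_\alpha \mathbf{u} \le \Bigl(-\tfrac{1+\kappa}{2}+\tfrac{\kappa_{max}-\kappa_{min}}{\sqrt{2}\,\varsigma_\alpha}\Bigr)\mathbf{u}^T W\mathbf{u},
\]
which is nonpositive precisely under condition \eqref{cond}. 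The main technical hurdle is the bookkeeping in the Cauchy--Schwarz step: the two halves must be distributed so that after invoking Lemma \ref{key_inequality1} the constant is exactly $(\kappa_{max}-\kappa_{min})/(\sqrt{2}\,\varsigma_\alpha)$ rather than, say, a factor two worse. The specific choice of $\kappa$ in the statement is precisely what aligns the sign/convexity hypothesis of Lemma \ref{key_inequality1} with the spectral bound of Theorem \ref{generatingfunction_rario} and produces the clean threshold \eqref{cond}.
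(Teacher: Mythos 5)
Your proposal is correct and follows essentially the same route as the paper: the shift $\tilde D-\kappa I$, the generating-function representation of Lemma \ref{JIN}, the pointwise bound $|g|\le\varsigma_\alpha^{-1}\Re[-g]$ from Theorem \ref{generatingfunction_rario}, Cauchy--Schwarz, and Lemma \ref{key_inequality1} applied to the shifted diagonal, arriving at the same threshold \eqref{cond}. The only (immaterial) difference is that you apply Cauchy--Schwarz with weight $|g|$ and then convert each factor, whereas the paper first replaces $|g|$ by $\varsigma_\alpha^{-1}\Re[-g]$ and then applies Cauchy--Schwarz with weight $\Re[-g]$.
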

\begin{proof}
Denote $D={\tilde D}-\kappa I$, then $A_\alpha=W_\alpha+{\tilde D}W_\alpha^T=W_\alpha+\kappa W_\alpha^T+DW_\alpha^T$.
For any ${\bf u}=[u_1,u_2,\ldots,u_{M-1}]^T$, we have
\begin{align}\label{condproof1}
 2{\bf u}^T{\cal H}(A_\alpha){\bf u}
 =(1+\kappa){\bf u}^T\left(W_\alpha+W_\alpha^T\right){\bf u}+{\bf u}^T\left(DW_\alpha^T+W_\alpha D\right){\bf u}.
\end{align}
 Denote $u(x)=\sum_{k=1}^{M-1}{ u}_ke^{{\bf i}kx}$
 and $v(x)=\sum_{k=1}^{M-1}{(Du)}_ke^{{\bf i}kx}$, it follows by Lemma \ref{JIN}
 and Lemma \ref{key_inequality1} that
\begin{align}\label{condproof2}
 &{\bf  u}^TW{\bf  u}={\bf  u}^T\left(-W_\alpha-W_\alpha^T\right){\bf  u}
 =\frac1{\pi}\int_{-\pi}^\pi
  \Re[-g]|u(x)|^2dx,\\\label{condproof2-2}
 & {\bf  u}^TDWD{\bf  u}
 =\frac1{\pi}\int_{-\pi}^\pi
  \Re[-g]|v(x)|^2dx\leq \frac{2(\kappa_{max}-\kappa_{min})^2}{\pi}\int_{-\pi}^\pi
  \Re[-g]|u(x)|^2dx.
\end{align}
Then using Lemma \ref{JIN} again and applying Cauchy-Schwarz inequality, Theorem \ref{generatingfunction_rario} and \eqref{condproof2-2}, we get
   \begin{align}\nonumber
  \left|{\bf  u}^T\left(-DW_\alpha^T-W_\alpha D\right){\bf  u}\right|
  =&\frac1{4\pi}\Big|\int_{-\pi}^\pi
  \big[(-g^*-g)(v^*u+u^*v)+(-g^*+g)(v^*u-u^*v)\big] dx\Big|\\\nonumber
  {\le}&\frac1{\pi}\int_{-\pi}^\pi
  |g(\alpha,x)||v(x)||u(x)| dx\\\nonumber
  \leq& \frac1{\pi\varsigma_\alpha}\int_{-\pi}^\pi
  \Re[-g]|v(x)||u(x)| dx\\\nonumber
  \leq& \frac1{\pi\varsigma_\alpha}\sqrt{\int_{-\pi}^\pi
  \Re[-g]|v(x)|^2 dx}\sqrt{\int_{-\pi}^\pi
  \Re[-g]|u(x)|^2 dx}\\\label{condproof3}
  \le& \frac{\sqrt{2}(\kappa_{max}-\kappa_{min})}{\pi\varsigma_\alpha}\int_{-\pi}^\pi
  \Re[-g]|u(x)|^2 dx.
  \end{align}
Consequently,  \eqref{condproof1}--\eqref{condproof3} yield
\begin{align}\label{condproof4}
 -2{\bf u}^T{\cal H}(A_\alpha){\bf u}\geq
 \frac{1}{\pi}\left(1+\kappa-\frac{\sqrt{2}(\kappa_{max}-\kappa_{min})}{\varsigma_\alpha}\right)\int_{-\pi}^\pi\Re[-g]|u(x)|^2 dx.
\end{align}
It implies that ${\bf u}^T{\cal H}(A_\alpha){\bf u}\leq0$ if $1+\kappa-\frac{\sqrt{2}(\kappa_{max}-\kappa_{min})}{\varsigma_\alpha}\geq0$. Hence ${\cal H}(A_\alpha)$ is negative semidefinite under the condition \eqref{cond}.
\end{proof}
 \begin{remark} We note \eqref{cond} always holds when the diffusion coefficients are propositional to each other. This is in accordance with the results in \cite{Deng_Fourthorder}.
  \end{remark}


\begin{lemma}\rm{(\cite{MatrixAnalysis})}\label{bilinear_inequality}
Let symmetric matrix $H\in \mathbb{R}^{n\times n}$ with eigenvalues
$\lambda_1\ge \lambda_2\ge \ldots \ge \lambda_n$. Then for all $w\in
\mathbb{R}^{n}$,
$$\lambda_n w^Tw\leq w^THw\leq \lambda_1 w^Tw.$$
\end{lemma}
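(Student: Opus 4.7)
The statement to prove is the classical Rayleigh--Ritz bound for a real symmetric matrix, and the plan is to rely on the spectral theorem followed by a direct expansion in the eigenbasis. The main ingredients are entirely standard, so the challenge is organization rather than any substantive obstacle.

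First I would invoke the spectral theorem: since $H\in\mathbb{R}^{n\times n}$ is symmetric, there exists an orthogonal matrix $Q\in\mathbb{R}^{n\times n}$ (so $Q^TQ=I$) such that $Q^THQ=\Lambda$, where $\Lambda=\mathrm{diag}(\lambda_1,\lambda_2,\ldots,\lambda_n)$ with the eigenvalues ordered as in the statement. Next, for an arbitrary $w\in\mathbb{R}^n$, I would introduce the change of variable $y=Q^Tw$, which is a bijection and preserves the Euclidean norm because $Q$ is orthogonal; in particular $y^Ty=w^TQQ^Tw=w^Tw$.

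Expressing the quadratic form in the new coordinates gives
\begin{equation*}
w^THw = w^T Q\Lambda Q^T w = y^T\Lambda y = \sum_{i=1}^{n}\lambda_i y_i^2.
\end{equation*}
Using the ordering $\lambda_n\le \lambda_i\le \lambda_1$ for every $i$, and the fact that each $y_i^2\ge 0$, the sum is bounded termwise by
\begin{equation*}
\lambda_n\sum_{i=1}^{n}y_i^2 \;\le\; \sum_{i=1}^{n}\lambda_i y_i^2 \;\le\; \lambda_1\sum_{i=1}^{n}y_i^2.
\end{equation*}
Substituting back $\sum_i y_i^2=y^Ty=w^Tw$ yields the desired chain $\lambda_n w^Tw\le w^THw\le \lambda_1 w^Tw$. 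The hardest bookkeeping step is simply making the change of variable correctly; once that is in place, the termwise comparison is immediate from the ordering of the eigenvalues and the nonnegativity of the squared coordinates.
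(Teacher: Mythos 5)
Your proof is correct: the spectral decomposition $H=Q\Lambda Q^T$, the norm-preserving change of variable $y=Q^Tw$, and the termwise bound $\lambda_n\sum_i y_i^2\le\sum_i\lambda_i y_i^2\le\lambda_1\sum_i y_i^2$ together give exactly the stated Rayleigh--Ritz inequality. The paper itself gives no proof of this lemma (it is simply quoted from the cited matrix-analysis reference), and your argument is the standard one that such a reference would supply, so there is nothing further to compare.
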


 Next we conclude the stability and convergence of scheme \eqref{Matrixform} by energy method. Once again,
 in the following two theorems, we only show the statements by assuming
  the condition is imposed on the ratio $\frac{d_-(x)}{d_+(x)}$,
  and remark that similar statements hold if the ratio $\frac{d_+(x)}{d_-(x)}$
  is under consideration.
\begin{theorem}\label{stability}
 If  the condition \eqref{cond} holds,
 the scheme \eqref{Matrixform}
 is stable and its solutions satisfy the following estimate
$$\left\|u^{n+1}\right\|_{\hat D}^2\leq e^{2T}\Big(\left\|\varphi\right\|_{\hat D}^2+2T\max_{0\leq k\leq n}\left\|f^{k+\frac12}\right\|_{\hat D}^2 \Big),\quad n=0,1,\ldots,N-1.$$
where $\|\cdot\|_{\hat D}$ is defined as $\|v\|_{\hat D}^2=h v^T{\hat D}v$ with ${\hat D}=D_+^{-1}$.
\end{theorem}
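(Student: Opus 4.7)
The plan is to exploit Theorem \ref{CONDITION} by first symmetrising the scheme against the weighted inner product $\langle v,w\rangle_{\hat D}=h v^T\hat D w$. Pre-multiplying \eqref{Matrixform} by $\hat D=D_+^{-1}$ turns the left and right matrices into $\hat D\mp\nu_{\tau,h,\alpha}A_\alpha$, where $A_\alpha=\hat D A=W_\alpha+\tilde D W_\alpha^T$ is precisely the matrix handled by Theorem \ref{CONDITION}. Rearranging yields the ``CN-form''
\begin{equation*}
\hat D\bigl(u^{n+1}-u^n\bigr)=\nu_{\tau,h,\alpha}A_\alpha\bigl(u^{n+1}+u^n\bigr)+\tau\hat D f^{n+\frac12}.
\end{equation*}
Along the way I would observe that $\hat D-\nu_{\tau,h,\alpha}A_\alpha$ has positive-definite symmetric part (the diagonal $\hat D$ is positive definite and, by Theorem \ref{CONDITION}, $-\mathcal H(A_\alpha)\succeq 0$), so $I-\nu_{\tau,h,\alpha}A$ is nonsingular and the scheme is uniquely solvable.

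Next I would test against $u^{n+1}+u^n$ in the Euclidean inner product. Using symmetry of the diagonal matrix $\hat D$, the left-hand side telescopes as
\begin{equation*}
(u^{n+1}+u^n)^T\hat D(u^{n+1}-u^n)=(u^{n+1})^T\hat D u^{n+1}-(u^n)^T\hat D u^n.
\end{equation*}
The quadratic form $\nu_{\tau,h,\alpha}(u^{n+1}+u^n)^T A_\alpha(u^{n+1}+u^n)$ equals its symmetric counterpart $\nu_{\tau,h,\alpha}(u^{n+1}+u^n)^T\mathcal H(A_\alpha)(u^{n+1}+u^n)\le 0$ by Theorem \ref{CONDITION}, so it can be dropped. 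Multiplying by $h$ gives
\begin{equation*}
\|u^{n+1}\|_{\hat D}^2-\|u^n\|_{\hat D}^2\le \tau\, h(u^{n+1}+u^n)^T\hat D f^{n+\frac12}.
\end{equation*}

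The right-hand side is then controlled by Cauchy--Schwarz in $\langle\cdot,\cdot\rangle_{\hat D}$ followed by the weighted Young inequality $ab\le\tfrac12 a^2+\tfrac12 b^2$, producing
\begin{equation*}
\|u^{n+1}\|_{\hat D}^2-\|u^n\|_{\hat D}^2\le \tfrac{\tau}{2}\bigl(\|u^{n+1}\|_{\hat D}^2+\|u^n\|_{\hat D}^2\bigr)+\tau\|f^{n+\frac12}\|_{\hat D}^2.
\end{equation*}
For $\tau$ small enough that $1-\tau/2\ge 1/2$, dividing through yields $\|u^{n+1}\|_{\hat D}^2\le\frac{1+\tau/2}{1-\tau/2}\|u^n\|_{\hat D}^2+\frac{\tau}{1-\tau/2}\|f^{n+\frac12}\|_{\hat D}^2\le e^{2\tau}\|u^n\|_{\hat D}^2+2\tau\|f^{n+\frac12}\|_{\hat D}^2$, using $\frac{1+\tau/2}{1-\tau/2}\le 1+2\tau\le e^{2\tau}$.

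Finally I would iterate this one-step estimate from $0$ to $n$, bound $\sum_{k=0}^{n}e^{2\tau(n-k)}\le (n+1)e^{2T}$ and $\tau(n+1)\le T$, and collect constants to obtain the claimed inequality $\|u^{n+1}\|_{\hat D}^2\le e^{2T}\bigl(\|\varphi\|_{\hat D}^2+2T\max_{0\le k\le n}\|f^{k+\frac12}\|_{\hat D}^2\bigr)$. The only real obstacle is the first step, namely choosing to work with the weighted norm $\|\cdot\|_{\hat D}$ instead of the usual $\ell^2$ norm; this is essential because Theorem \ref{CONDITION} gives negative semi-definiteness of the symmetric part of $A_\alpha=\hat D A$ rather than of $A$ itself, and pre-multiplication by $\hat D$ is the natural device that both produces $A_\alpha$ and, since $\hat D$ is a positive diagonal matrix, keeps the time difference and forcing terms in a form amenable to the standard Crank--Nicolson energy argument.
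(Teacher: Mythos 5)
Your proposal is correct and follows essentially the same route as the paper: premultiplication by $D_+^{-1}$ to bring in $A_\alpha$, testing against $u^{n+1}+u^n$, dropping the quadratic form via Theorem \ref{CONDITION}, Cauchy--Schwarz, and iteration of the one-step recursion $\|u^{n+1}\|_{\hat D}^2\le\frac{2+\tau}{2-\tau}\|u^n\|_{\hat D}^2+\frac{2\tau}{2-\tau}\|f^{n+\frac12}\|_{\hat D}^2$ with the bound $\bigl(\frac{2+\tau}{2-\tau}\bigr)^{n+1}\le(1+2\tau)^{n+1}\le e^{2T}$ for $\tau\le1$. The added remark on unique solvability is a small bonus not present in the paper's proof, but otherwise the arguments coincide.
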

\begin{proof}
Multiplying $D_+^{-1}$ on the both sides of \eqref{Matrixform}, we get
\begin{align*}
(D_+^{-1}-\nu_{\tau, h, \alpha}A_\alpha)u^{n+1}=
 \left(D^{-1}_++\nu_{\tau, h,\alpha}A_\alpha\right)u^{n}
 +\tau D^{-1}_+f^{n+\frac{1}{2}},
\end{align*}
which is equivalent to
\begin{align}\label{stable1D1}
D_+^{-1}\left(u^{n+1}-u^{n} \right)-\nu_{\tau, h, \alpha}A_\alpha \left(u^{n+1}+u^{n} \right)=
\tau D^{-1}_+f^{n+\frac{1}{2}}.
\end{align}
Multiplying the both sides of \eqref{stable1D1} with $h\left(u^{n+1}+u^{n} \right)^T$, we have
\begin{align}\nonumber
h\left(u^{n+1}+u^{n} \right)^TD_+^{-1}\left(u^{n+1}-u^{n} \right)+h\nu_{\tau, h, \alpha}&\left(u^{n+1}+u^{n} \right)^T(-A_\alpha )\left(u^{n+1}+u^{n} \right)\\\label{stable1D2}
=&\tau h\left(u^{n+1}+u^{n} \right)^T D^{-1}_+f^{n+\frac{1}{2}}.
\end{align}
 Notice that $w^TQw=w^T{\cal H}(Q)w$ for any real $w$, $Q$, and ${\cal H}(-A_\alpha)$ is positive semidefinite  by Theorem \ref{CONDITION}.
  Therefore, the second term on the left hand side of \eqref{stable1D2} can be estimated as
 $$h\nu_{\tau, h, \alpha}\left(u^{n+1}+u^{n} \right)^T(-A_\alpha )\left(u^{n+1}+u^{n} \right)=h\nu_{\tau, h, \alpha}\left(u^{n+1}+u^{n} \right)^T{\cal H}(-A_\alpha )\left(u^{n+1}+u^{n} \right)\geq0.$$
 As a result
 \begin{align}\label{stable1D4}
 h(u^{n+1})^TD_+^{-1}u^{n+1}- h(u^{n})^TD^{-1}_+u^{n}\le
 \tau h(u^{n+1})^T D^{-1}_+f^{n+\frac{1}{2}}
 +\tau h(u^{n})^T D^{-1}_+f^{n+\frac{1}{2}}.
 \end{align}
Applying Cauchy-Schwarz inequality on the right hand side of \eqref{stable1D4}, we get
\begin{align*}
\left\|u^{n+1}\right\|_{\hat D}^2\leq\left\|u^{n}\right\|_{\hat D}^2+\frac{\tau}{2}\left\|u^{n+1}\right\|_{\hat D}^2+\frac{\tau}{2}\left\|u^{n}\right\|_{\hat D}^2+\tau\left\|f^{n+\frac12}\right\|_{\hat D}^2.
\end{align*}
That is
\begin{align}\label{stable1D5}
\left\|u^{n+1}\right\|_{\hat D}^2\leq\frac{2+\tau}{2-\tau}\left\|u^{n}\right\|_{\hat D}^2+\frac{2\tau}{2-\tau}\left\|f^{n+\frac12}\right\|_{\hat D}^2.
\end{align}
 Applying \eqref{stable1D5} iteratively for $n+1$ times, we have
\begin{align}\nonumber
\left\|u^{n+1}\right\|_{\hat D}^2\leq&\Big(\frac{2+\tau}{2-\tau}\Big)^{n+1}\left\|u^{0}\right\|_{\hat D}^2\\\label{stable1D6}
&+\frac{2\tau}{2-\tau}\Big[ 1+\frac{2+\tau}{2-\tau}+\Big(\frac{2+\tau}{2-\tau}\Big)^2+\ldots+\Big(\frac{2+\tau}{2-\tau}\Big)^{n}\Big]\max_{0\leq k\leq n}\left\|f^{k+\frac12}\right\|_{\hat D}^2.
\end{align}
When the time step size $\tau$ is sufficiently small $(\tau\leq1)$, we have
\begin{align}\label{stable1D7}
\Big(\frac{2+\tau}{2-\tau}\Big)^{n+1}=\Big(1+\frac{2\tau}{2-\tau}\Big)^{n+1}\leq(1+2\tau)^{n+1}\leq\big(1+\frac{2T}{N} \big)^N\overset{N\rightarrow\infty}{=}e^{2T},
\end{align}
and
\begin{align}\label{stable1D8}
\frac{2\tau}{2-\tau}\sum_{k=0}^n\Big(\frac{2+\tau}{2-\tau}\Big)^k\leq2\tau\sum_{k=0}^n\Big(\frac{2+\tau}{2-\tau}\Big)^{n+1}\leq
2\tau N\Big(\frac{2+\tau}{2-\tau}\Big)^{n+1}\leq 2Te^{2T}.
\end{align}
Consequently, substituting \eqref{stable1D7}, \eqref{stable1D8} into \eqref{stable1D6}, we obtain
$$\left\|u^{n+1}\right\|_{\hat D}^2\leq e^{2T}\Big(\left\|u^0\right\|_{\hat D}^2+2T\max_{0\leq k\leq n}\left\|f^{k+\frac12}\right\|_{\hat D}^2 \Big).$$
\end{proof}

\begin{theorem}\label{convergence1D}
Let $u(x,t)$ be the exact solution of \eqref{fde}, $u_i^n$ be the solutions of finite difference scheme \eqref{Matrixform}. Denote $e_i^n=u(x_i,t_n)-u_i^n$, $0\leq i\leq M$, $0\leq n\leq N$. If the condition \eqref{cond} holds, then there exists a positive constant $c_2$ such that
$$\|e^n\|^2\leq c_2(\tau^2+h^2)^2,$$
where $\|\cdot\|$ denotes the discrete $L^2$ norm, i.e. $\|v\|=\sqrt{hv^Tv}$.
\end{theorem}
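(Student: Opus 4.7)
The plan is to reduce the convergence statement to an immediate corollary of Theorem \ref{stability} applied to the error equation. The only real work is (i) writing the exact solution into the discrete scheme to extract a consistency residual, and (ii) translating the bound produced by Theorem \ref{stability} from the weighted norm $\|\cdot\|_{\hat D}$ back to the unweighted discrete $L^2$ norm $\|\cdot\|$.

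First I would substitute the grid values $U_i^n := u(x_i,t_n)$ into the Crank--Nicolson/WSGD discretisation. Using the $\mathcal{O}(h^2)$ spatial consistency from \eqref{FD2nd} together with the standard $\mathcal{O}(\tau^2)$ Crank--Nicolson consistency in time (under the smoothness hypotheses stated for \eqref{FD2nd}), the vector $U^n$ satisfies
$$(I-\nu_{\tau,h,\alpha}A)U^{n+1}=(I+\nu_{\tau,h,\alpha}A)U^{n}+\tau f^{n+\frac12}+\tau R^{n+\frac12},$$
with $|R_i^{n+\frac12}|\le c_1(\tau^2+h^2)$. Subtracting \eqref{Matrixform} yields
$$(I-\nu_{\tau,h,\alpha}A)e^{n+1}=(I+\nu_{\tau,h,\alpha}A)e^{n}+\tau R^{n+\frac12},\qquad e^0=0,$$
which is exactly of the form handled by Theorem \ref{stability}, with the source term $R^{n+\frac12}$ playing the role of $f^{n+\frac12}$ and with vanishing initial data.

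Next I would invoke Theorem \ref{stability} verbatim on this error equation, which gives
$$\|e^{n+1}\|_{\hat D}^{2}\le 2Te^{2T}\max_{0\le k\le n}\|R^{k+\frac12}\|_{\hat D}^{2}.$$
Since the ratio $\tilde d(x)=d_-(x)/d_+(x)$ appearing in condition \eqref{cond} is only meaningful when $d_+$ is bounded below by a positive constant on $[x_L,x_R]$, I would simply adopt that as a running assumption; together with an upper bound on $d_+$ it makes $\|\cdot\|_{\hat D}$ and $\|\cdot\|$ equivalent with constants depending only on $\min d_+$ and $\max d_+$. The pointwise residual bound then gives $\|R^{k+\frac12}\|^{2}\le (x_R-x_L)\,c_1^{2}(\tau^2+h^2)^2$, and combining this with the norm equivalence yields $\|e^n\|^2\le c_2(\tau^2+h^2)^2$ with $c_2$ depending on $T$, $c_1$ and the bounds on $d_+$.

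The only delicate point is the norm conversion: if $d_+$ is merely nonnegative, $D_+^{-1}$ need not exist and the entire framework $A_\alpha=D_+^{-1}A$ collapses. Either one assumes $d_+$ uniformly positive, or, should \eqref{cond} be imposed on $d_+/d_-$ instead, one performs the symmetric argument with $D_-^{-1}$ and requires $d_-$ uniformly positive; the paper's remark following Theorem \ref{CONDITION} already signals that both symmetric versions are available. Beyond this bookkeeping, the whole proof is a direct transplant of the stability estimate onto the error equation and requires no new ideas.
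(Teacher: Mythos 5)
Your proposal is correct and follows essentially the same route as the paper: derive the error equation by subtracting the scheme from the consistency relation, apply Theorem \ref{stability} with $R^{n+\frac12}$ in place of $f^{n+\frac12}$ and zero initial error, and then pass from $\|\cdot\|_{\hat D}$ to $\|\cdot\|$ via the boundedness of the positive diagonal matrix $\hat D=D_+^{-1}$ (the paper invokes Lemma \ref{bilinear_inequality} for this last step). Your extra care about $d_+$ being uniformly positive and bounded is a fair explicit statement of what the paper assumes implicitly when it says $\hat D$ is positive and bounded away from zero, but it does not change the argument.
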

\begin{proof}
Denote $e^n=[e_1^n,e_2^n,\ldots,e_{M-1}^n]^T$ and $R^{n+\frac{1}{2}}=[R_1^{n+\frac{1}{2}},R_2^{n+\frac{1}{2}},\ldots,R_{M-1}^{n+\frac{1}{2}}]^T$.
We can easily see that $e^n$ and $e_i^n$ satisfy the following error equation
\begin{align*}
&\left(I-\nu_{\tau,h,\alpha}A \right)e^{n+1}=\left(I+\nu_{\tau,h,\alpha}A \right)e^{n}+\tau R^{n+\frac{1}{2}}, \quad 0\leq n\leq N-1,\\
& e_0^n=e_M^n=0, \quad 1\leq n\leq N,  \qquad e_i^0=0, \quad 0\leq i\leq M.
\end{align*}
By Theorem \ref{stability}, we have
$$\left\|e^{n+1}\right\|_{\hat D}^2\leq 2T e^{2T}\max_{0\leq k\leq n}\left\|R^{k+\frac12}\right\|_{\hat D}^2 ,\quad n=0,1,\ldots,N-1.$$
Since $\hat{D}$ is a positive diagonal matrix and is bounded away from zero, by Lemma \ref{bilinear_inequality}, we can conclude that
$$\|e^{n+1}\|^2\leq c_2(\tau^2+h^2)^2,\quad n=0,1,\ldots,N-1.$$
\end{proof}

%
\section{Some Extensions}\label{secondorder2D}

 Note that the condition \eqref{cond} requires at least one of the coefficients $d_+(x)$ and $d_-(x)$ to be  positive because we multiply $A$ with $D_+^{-1}$ (or $D_-^{-1}$) so that proportional $d_+(x)$ and $d_-(x)$ are included as a special case
 in the derived condition.
 In fact, we can impose another condition without this restriction. Suppose that $0\leq\kappa_{min}^+\leq d_+(x)\leq \kappa_{max}^+<\infty$ and $0\leq\kappa_{min}^-\leq d_-(x)\leq \kappa_{max}^-<\infty$. Then we have the following theorem:
\begin{theorem}\label{CONDITION2}
 If the following condition holds
 \begin{equation}\label{cond2-1}
 \kappa^++\kappa^--\frac{\sqrt{2}(\kappa_{max}^++\kappa_{max}^--\kappa_{min}^+-\kappa_{min}^-)}{\varsigma_\alpha}>0,
 \end{equation}
then the symmetric part ${\cal H}(A)$ of $A$ in \eqref{Matrixform} is negative definite, where $\kappa^+=\kappa_{max}^+$ when $d_+(x)$ is concave, $\kappa^+=\kappa_{min}^+$ when $d_+(x)$ is convex, and, $\kappa^-=\kappa_{max}^-$ when $d_-(x)$ is concave, $\kappa^-=\kappa_{min}^-$ when $d_-(x)$ is convex.
\end{theorem}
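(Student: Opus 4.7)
The plan is to adapt the argument used for Theorem \ref{CONDITION}, but now treat the two diffusion matrices independently rather than working with their ratio. Specifically, I would write $D_+ = \kappa^+ I + E_+$ and $D_- = \kappa^- I + E_-$, so that
\[
A = (\kappa^+ W_\alpha + \kappa^- W_\alpha^T) + (E_+ W_\alpha + E_- W_\alpha^T).
\]
Taking the symmetric part and using that $E_\pm$ is diagonal (hence symmetric), I obtain
\[
2\,\mathbf{u}^T\mathcal{H}(A)\mathbf{u} = (\kappa^++\kappa^-)\,\mathbf{u}^T(W_\alpha+W_\alpha^T)\mathbf{u} + \mathbf{u}^T(E_+ W_\alpha + W_\alpha^T E_+)\mathbf{u} + \mathbf{u}^T(E_- W_\alpha^T + W_\alpha E_-)\mathbf{u}.
\]
The first term is handled exactly as in \eqref{condproof2}, giving $-(\kappa^++\kappa^-)\pi^{-1}\!\int_{-\pi}^{\pi}\!\Re[-g]|u(x)|^2\,dx$.

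Next I would bound each of the two cross terms separately, replicating the chain Cauchy--Schwarz / Theorem \ref{generatingfunction_rario} / Lemma \ref{key_inequality1} used in \eqref{condproof3}. The key observation is that the choice of $\kappa^+$ in the statement is exactly what is needed so that $E_+$ fits the hypothesis of Lemma \ref{key_inequality1}: if $d_+(x)$ is convex we take $\kappa^+=\kappa_{min}^+$, so $d_+(x)-\kappa_{min}^+$ is convex and nonnegative; if $d_+(x)$ is concave we take $\kappa^+=\kappa_{max}^+$, so $d_+(x)-\kappa_{max}^+$ is concave and nonpositive. Moreover $\max_i|(E_+)_{ii}|^2\leq(\kappa_{max}^+-\kappa_{min}^+)^2$, so Lemma \ref{key_inequality1} delivers
\[
\bigl|\mathbf{u}^T(E_+ W_\alpha + W_\alpha^T E_+)\mathbf{u}\bigr|\leq \frac{\sqrt{2}(\kappa_{max}^+-\kappa_{min}^+)}{\pi\,\varsigma_\alpha}\int_{-\pi}^{\pi}\Re[-g]|u(x)|^2\,dx,
\]
and the same bound with $+\leftrightarrow-$ holds for the $E_-$ cross term, using that $W_\alpha^T$ has the same generating function modulus and real part as $W_\alpha$ and that $\mathbf{u}^T E_- W_\alpha^T\mathbf{u}=\mathbf{u}^T W_\alpha E_-\mathbf{u}$ after transposition.

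Putting the three pieces together yields
\[
-2\,\mathbf{u}^T\mathcal{H}(A)\mathbf{u}\geq \frac{1}{\pi}\Bigl[\kappa^++\kappa^- -\frac{\sqrt{2}(\kappa_{max}^++\kappa_{max}^- -\kappa_{min}^+-\kappa_{min}^-)}{\varsigma_\alpha}\Bigr]\int_{-\pi}^{\pi}\Re[-g]|u(x)|^2\,dx.
\]
Under the strict inequality \eqref{cond2-1} the bracket is strictly positive, and since $\Re[-g]\ge 0$ with $\int\Re[-g]|u|^2\,dx>0$ whenever $\mathbf{u}\ne 0$ (because $-W_\alpha-W_\alpha^T$ is positive definite, as already exploited in Theorem \ref{CONDITION}), we conclude $\mathbf{u}^T\mathcal{H}(A)\mathbf{u}<0$ for every nonzero $\mathbf{u}$.

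The main obstacle I anticipate is purely bookkeeping: verifying that the asymmetric cross term $E_-W_\alpha^T+W_\alpha E_-$ (involving the transpose) yields a bound with exactly the same structural constants as the $E_+$ term from Theorem \ref{CONDITION}. This amounts to checking that when one substitutes $W_\alpha^T$ into Lemma \ref{JIN}, the resulting integrand again involves only $|g|$ and $\Re[-g]$, so Theorem \ref{generatingfunction_rario} and Lemma \ref{key_inequality1} apply verbatim. No new analytical input should be required beyond this symmetry check.
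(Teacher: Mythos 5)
Your proposal is correct and follows essentially the same route as the paper: the authors likewise split $D_\pm=\kappa^\pm I+\tilde D_\pm$, expand $2\mathbf{u}^T{\cal H}(A)\mathbf{u}$ into the proportional part plus two cross terms, and bound each cross term by the Cauchy--Schwarz/Theorem \ref{generatingfunction_rario}/Lemma \ref{key_inequality1} chain from Theorem \ref{CONDITION} to reach the identical final inequality. Your explicit remark that strict positivity of $\int_{-\pi}^{\pi}\Re[-g]|u(x)|^2\,dx$ for $\mathbf{u}\neq 0$ upgrades the conclusion to negative definiteness is a detail the paper leaves implicit, but it is not a different method.
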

\begin{proof}
 Denote ${\tilde D_+}=D_+-\kappa^+ I$ and ${\tilde D_-}=D_--\kappa^- I$.
 Then $A=\kappa^+W_\alpha+\kappa^-W_\alpha^T+{\tilde D_+}W_\alpha+{\tilde D_-}W_\alpha^T$.
For any ${\bf u}=[u_1,u_2,\ldots,u_{M-1}]^T$, we have
\begin{align*}
 2{\bf u}^T{\cal H}(A){\bf u}
 =(\kappa^++\kappa^-){\bf u}^T\left(W_\alpha+W_\alpha^T\right){\bf u}+{\bf u}^T\left({\tilde D_+}W_\alpha+W_\alpha^T {\tilde D_+}\right){\bf u}+{\bf u}^T\left({\tilde D_-}W_\alpha^T+W_\alpha {\tilde D_-}\right){\bf u}.
\end{align*}
Similar to the proof of Theorem \ref{CONDITION}, it follows that
\begin{align*}
\left|{\bf  u}^T\left(-{\tilde D_+}W_\alpha-W_\alpha^T {\tilde D_+}\right){\bf  u}\right|\le& \frac{\sqrt{2}(\kappa_{max}^+-\kappa_{min}^+)}{\pi\varsigma_\alpha}\int_{-\pi}^\pi
  \Re[-g]|u(x)|^2 dx,\\
\left|{\bf  u}^T\left(-{\tilde D_-}W_\alpha^T-W_\alpha {\tilde D_-}\right){\bf  u}\right|\le& \frac{\sqrt{2}(\kappa_{max}^--\kappa_{min}^-)}{\pi\varsigma_\alpha}\int_{-\pi}^\pi
  \Re[-g]|u(x)|^2 dx,
\end{align*}
and then
$$-2{\bf u}^T{\cal H}(A){\bf u}\geq
 \frac{1}{\pi}\left(\kappa^++\kappa^--\frac{\sqrt{2}(\kappa_{max}^++\kappa_{max}^--\kappa_{min}^+-\kappa_{min}^-)}{\varsigma_\alpha}\right)\int_{-\pi}^\pi\Re[-g]|u(x)|^2 dx.$$
Therefore, the theorem can be proved using arguments similar to those for Theorem \ref{CONDITION}.
\end{proof}

We next study the two-dimensional problem with variable coefficients:
 \begin{equation}\label{2Dfde}
\begin{array}{l}
\frac{\partial u(x,y, t)}{\partial t}=
 d_+(x)\,{_{x_L}}D_x^{\alpha}u(x,y,t)+d_-(x)\,{_x}D_{x_R}^{\alpha}u(x,y,t)+e_+(y)\,{_{y_L}}D_y^{\beta}u(x,y,t)\\
 \vspace{2mm}
 ~~~~~~~~~~~~~+e_-(y)\,{_y}D_{y_R}^{\beta}u(x,y,t)+f(x,y,t),\qquad (x,y)\in\Omega,~ t\in(0,T],\\
 \vspace{2mm}
  u(x,y,t)=0, \qquad (x,y)\in\partial\Omega,\quad t\in[0,T],\\
 u(x,y,0)=\varphi(x,y),\qquad (x,y)\in\bar\Omega.
 \end{array}
 \end{equation}
Here $1<\beta<2$, $\Omega=(x_L,x_R)\times(y_L,y_R)$, and $d_\pm(x)$, $e_\pm(y)$ are nonnegative functions.

 To state a finite difference scheme for \eqref{2Dfde},
 let $h_1=\frac{x_R-x_L}{M_1}$, $h_2=\frac{y_R-y_L}{M_2}$ and $\tau=\frac TN$ be the spatial and temporal step sizes respectively,
 where $M_1$, $M_2$ and $N$ are some given integers.
 For $i=0,1,\ldots,M_1, ~j=0,1,\ldots,M_2$, and $n=0,1,\ldots,N$,
 denote $x_i=ih_1$, $y_j=jh_2$ and $t_n=n\tau$.  Let $\bar{\Omega}_h=\{(x_i,y_j)|0\leq i\leq M_1, 0\leq j\leq M_2\}$, ${\Omega}_h={\bar{\Omega}_h}\cap{\Omega}$, $\partial{{\Omega}_h}={\bar{\Omega}_h}\cap{\partial{\Omega}}$.
  Furthermore, let $d_{+,i}=d_+(x_i)$, $d_{-,i}=d_-(x_i)$, and $e_{+,j}=e_+(y_j)$, $e_{-,j}=e_-(y_j)$.
   Let $u_{i,j}^n$ be the numerical approximation of $u(x_i,y_j,t_n)$, and $f_{i,j}^{n+\frac12}=f(x_i,y_j,t_{n+\frac12})$, $\varphi_{i,j}=\varphi(x_i,y_j)$. Then applying Crank-Nicolson method, the first equation of \eqref{2Dfde} can be discretized as
\begin{equation}\label{2d-scheme}
\begin{array}{l}\vspace{2mm}
\Big(\frac1{\tau}-\frac{1}{2}\delta_x^\alpha-\frac{1}{2}\delta_y^\beta \Big)u_{i,j}^{n+1}=
\Big(\frac1{\tau}+\frac{1}{2}\delta_x^\alpha+\frac{1}{2}\delta_y^\beta \Big)u_{i,j}^{n}+ f_{i,j}^{n+\frac12}
+{\cal O}(\tau^2+h_1^2+h_2^2), \\
~~~~~~~~~~~~~~~~~~~~~~~~~~~~~~~~~~~~~~~~~~~\quad\qquad\qquad(x_i,y_j)\in\Omega_h,~ 0\leq n\leq N-1,
\end{array}
\end{equation}
where
$$\delta_x^\alpha u_{i,j}^{n}=\frac{1}{h^{\alpha}}\left(d_{+,i}\sum_{k=0}^{{i}}w_{k}^{(\alpha)}u_{i-k+1,j}^{n}
 +d_{-,i}\sum_{k=0}^{{M_1-i}}w_{k}^{(\alpha)}u_{i+k-1,j}^{n}\right),$$
$$\delta_y^\beta u_{i,j}^{n}=\frac{1}{h^{\beta}}\left(e_{+,j}\sum_{k=0}^{{j}}w_{k}^{(\beta)}u_{i,j-k+1}^{n}
 +e_{-,j}\sum_{k=0}^{{M_2-j}}w_{k}^{(\beta)}u_{i,j+k-1}^{n}\right).$$

 Note that the size of \eqref{2d-scheme} is in general very large and we seek to improve efficiency by empolying ADI method.
 To this end, we add $\frac{\tau}{4}\delta_x^\alpha\delta_y^\beta(u_{i,j}^{n+1}-u_{i,j}^n)$ which is an ${\cal O}(\tau^2)$ term,
 to the left hand side of \eqref{2d-scheme}, and we obtain the following ADI approximation for \eqref{2Dfde}:
\begin{equation}\label{2dADI-scheme}
\begin{array}{l}\vspace{2mm}
\Big(1-\frac{\tau}{2}\delta_x^\alpha\Big)\Big(1-\frac{\tau}{2}\delta_y^\beta \Big)u_{i,j}^{n+1}=
\Big(1+\frac{\tau}{2}\delta_x^\alpha\Big)\Big(1+\frac{\tau}{2}\delta_y^\beta \Big)u_{i,j}^{n}+\tau f_{i,j}^{n+\frac12}+\tau R_{i,j}^{n+\frac12}, \\
~~~~~~~~~~~~~~~~~~~~~~~~~~~~~~~~~~~~~~~~~\quad\qquad\qquad(x_i,y_j)\in\Omega_h,~ 0\leq n\leq N-1,
\end{array}
\end{equation}
where $R_{i,j}^{n+\frac12}\leq c_3(\tau^2+h_1^2+h_2^2)$ for a positive constant $c_3$.

Take
$$u^{n}=[u^{n}_{1,1},u^{n}_{2,1},\ldots,u^{n}_{M_1-1,1},u^{n}_{1,2},\ldots,u^{n}_{M_1-1,2},
\ldots,u^{n}_{1,M_2-1},\ldots,u^{n}_{M_1-1,M_2-1}]^T,$$
$$f^{n}=[f^{n}_{1,1},f^{n}_{2,1},\ldots,f^{n}_{M_1-1,1},f^{n}_{1,2},\ldots,f^{n}_{M_1-1,2},
\ldots,f^{n}_{1,M_2-1},\ldots,f^{n}_{M_1-1,M_2-1}]^T,$$
and denote
$$A_x=\frac{1}{h_1^\alpha}\big[(I\otimes D_+)(I\otimes W_\alpha)+ (I\otimes D_-)(I\otimes W_\alpha^T)\big]
=\frac{1}{h_1^\alpha}I\otimes (D_+W_\alpha+D_-W_\alpha^T),$$
$$A_y=\frac{1}{h_2^\beta}\big[(E_+\otimes I)(W_\beta\otimes I)+ (E_-\otimes I)(W_\beta^T\otimes I)\big]
=\frac{1}{h_2^\beta}(E_+W_\beta+E_-W_\beta^T)\otimes I,$$
 where $I$ is the identity matrix and the symbol $\otimes$ denotes the Kronecker product,
 $W_\beta$, $W_\alpha$ are defined in \eqref{W},
 $D_{\pm}$ and $E_{\pm}$ are diagonal matrices taking the values of $d_{\pm}(x)$ and $e_{\pm}(y)$ at grid points respectively.

Therefore, omitting the small term $\tau R_{i,j}^{n+\frac12}$ in \eqref{2dADI-scheme}, the ADI scheme in matrix form for \eqref{2Dfde} can be given as:
\begin{align}\label{2dADI-matrix}
\big(I-\frac{\tau}{2}A_x\big)\big(I-\frac{\tau}{2}A_y \big)u^{n+1}=
\big(I+\frac{\tau}{2}A_x\big)\big(I+\frac{\tau}{2}A_y \big)u^{n}+\tau f^{n+\frac12}, \quad 0\leq n\leq N-1.
\end{align}
 Although one can easily extend arguments in  subsection \ref{1dsc} to study the scheme \eqref{2d-scheme},
 straight modification of these arguments do not work for the ADI scheme \eqref{2dADI-matrix}
 due to the extra term introduced by the method. We need further estimates for  our analysis.
As in Theorem \ref{generatingfunction_rario}, we denote
 $$\varsigma_\beta\triangleq\min_{x}\frac{\Re[-g(\beta,x)]}{|g(\beta,x)|}=|\cos(\frac{\beta}{2}\pi)|,$$
 where $g(\beta,x)$ is the generating function of matrix $W_\beta$. Then ${\cal H}(A_x)$
 and ${\cal H}(A_y)$ are negative definite respectively under the condition \eqref{cond2-1} and
 \begin{equation}\label{cond2-2}
 \chi^++\chi^--\frac{\sqrt{2}(\chi_{max}^++\chi_{max}^--\chi_{min}^+-\chi_{min}^-)}{\varsigma_\beta}>0,
 \end{equation}
 where $\chi_{max}^+,~\chi_{min}^+,~\chi_{max}^-,~\chi_{min}^-,~\chi^+,~\chi^-$, which have similar definitions with $\kappa_{max}^+,~\kappa_{min}^+,~\kappa_{max}^-,\\~\kappa_{min}^-,~\kappa^+,~\kappa^-$, are defined on the variable coefficients ${e_+(y)}$ and ${e_-(y)}$.

 Then, referring to the proof of Theorem 6 in \cite{Deng_Secondorder}, we have the following assertion on the stability of scheme \eqref{2dADI-matrix}.
 \begin{theorem}
 If the conditions \eqref{cond2-1} and \eqref{cond2-2} hold, the two-dimensional ADI scheme \eqref{2dADI-matrix} is stable.
 \end{theorem}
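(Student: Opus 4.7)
The plan is to exploit the Kronecker-product structure of $A_x$ and $A_y$, which makes them commute, and then reduce the ADI stability question to norm estimates of Cayley-type transforms applied separately in each direction. Because the hypotheses in this theorem are imposed on the full matrices $A$ (via Theorem \ref{CONDITION2}) rather than on $A_\alpha$, the natural working norm is the standard Euclidean norm on $\mathbb{R}^{(M_1-1)(M_2-1)}$, and one obtains the discrete $L^2$ bound up to the trivial factor $h_1h_2$.

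First, I would verify the commutativity: writing $M_x = \frac{1}{h_1^\alpha}(D_+W_\alpha+D_-W_\alpha^T)$ and $M_y = \frac{1}{h_2^\beta}(E_+W_\beta+E_-W_\beta^T)$ so that $A_x = I\otimes M_x$ and $A_y = M_y\otimes I$, the mixed-product identity $(A\otimes B)(C\otimes D)=(AC)\otimes(BD)$ gives $A_xA_y = M_y\otimes M_x = A_yA_x$. In particular $(I-\tfrac{\tau}{2}A_x)^{-1}$ and $(I\pm\tfrac{\tau}{2}A_y)$ commute, and likewise for the reversed roles. Next, by Theorem \ref{CONDITION2} applied in the $x$ direction under \eqref{cond2-1}, $\mathcal{H}(M_x)$ is negative definite; since $\mathcal{H}(A_x) = I\otimes \mathcal{H}(M_x)$ is block diagonal with identical negative definite blocks, $\mathcal{H}(A_x)$ itself is negative definite. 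The analogous argument with \eqref{cond2-2} gives $\mathcal{H}(A_y)\prec 0$.

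The analytic heart of the proof is the following elementary identity, valid for any real matrix $A$ and any vector $v$:
\begin{equation*}
\bigl\|(I-\tfrac{\tau}{2}A)v\bigr\|^2-\bigl\|(I+\tfrac{\tau}{2}A)v\bigr\|^2 = -2\tau\, v^T\mathcal{H}(A)v,\qquad \bigl\|(I-\tfrac{\tau}{2}A)v\bigr\|^2 = \|v\|^2-\tau v^T\mathcal{H}(A)v+\tfrac{\tau^2}{4}\|Av\|^2.
\end{equation*}
With $A = A_x$ and $\mathcal{H}(A_x)\prec 0$, the first identity gives $\|(I+\tfrac{\tau}{2}A_x)v\|\le\|(I-\tfrac{\tau}{2}A_x)v\|$ and the second gives $\|(I-\tfrac{\tau}{2}A_x)v\|\ge\|v\|$, so $I-\tfrac{\tau}{2}A_x$ is invertible and the Cayley-type operator $S_x := (I-\tfrac{\tau}{2}A_x)^{-1}(I+\tfrac{\tau}{2}A_x)$ together with $(I-\tfrac{\tau}{2}A_x)^{-1}$ both have Euclidean operator norm at most $1$. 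The same holds with $y$ in place of $x$.

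Using commutativity to rewrite \eqref{2dADI-matrix} as
\begin{equation*}
u^{n+1} = S_xS_y\,u^n + \tau\,(I-\tfrac{\tau}{2}A_x)^{-1}(I-\tfrac{\tau}{2}A_y)^{-1}f^{n+\frac12},
\end{equation*}
the previous bounds yield $\|u^{n+1}\|\le\|u^n\|+\tau\|f^{n+\frac12}\|$, which iterates to the standard stability estimate $\|u^n\|\le\|\varphi\|+T\max_{0\le k\le n-1}\|f^{k+\frac12}\|$. The step I expect to require most care is the factorization after commutativity: the ADI splitting introduces an extra $\frac{\tau^2}{4}A_xA_y$ compared with the Crank--Nicolson discretization \eqref{2d-scheme}, and without $A_xA_y=A_yA_x$ one cannot cleanly isolate $S_x$ and $S_y$ as independent contractions; verifying commutativity via the Kronecker structure is therefore the crucial prerequisite that unlocks the otherwise routine energy argument, in parallel with the approach taken in Theorem~6 of \cite{Deng_Secondorder}.
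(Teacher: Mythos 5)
Your proof is correct and is essentially the argument the paper invokes: the paper does not write out a proof but simply refers to Theorem 6 of \cite{Deng_Secondorder}, whose reasoning is exactly your combination of the Kronecker-product commutativity $A_xA_y=A_yA_x$ with the Cayley-transform contraction bounds $\|(I-\tfrac{\tau}{2}A_x)^{-1}(I+\tfrac{\tau}{2}A_x)\|\le 1$ and $\|(I-\tfrac{\tau}{2}A_x)^{-1}\|\le 1$ deduced from the negative definiteness of ${\cal H}(A_x)$ and ${\cal H}(A_y)$ supplied by Theorem \ref{CONDITION2} under \eqref{cond2-1} and \eqref{cond2-2}. Your writeup fills in the details the paper leaves to the citation, and I see no gap in it.
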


\section{Numerical Experiments}\label{numerical}
 In this section, we carry out numerical experiments for the proposed scheme \eqref{Matrixform} and \eqref{2dADI-matrix} to illustrate our theoretical statements. All our tests were done in MATLAB R2014a with a desktop computer (Dell optiplex 7020) having the following configuration: Intel(R) Core(TM) i7-4790 CPU 3.60GHz and 16.00G RAM. When computing the two-dimensional examples, we always take $h_1=h_2=h$.
 The $L^2$ norm errors between the exact and the numerical solutions
 $$E_2(h,\tau)=\max_{0\leq n\leq N}\|e^n\|,$$
 are shown in the following tables.
 Furthermore, the spatial convergence order, denoted by
 $$Rate1=\log_2\bigg(\dfrac{E_2(2h,\tau)}{E_2(h,\tau)}\bigg),$$
 for sufficiently small $\tau$, and the temporal convergence order, denoted
 by
 $$Rate2=\log_2\bigg(\dfrac{E_2(h,2\tau)}{E_2(h,\tau)}\bigg),$$
 when $h$ is sufficiently small, are reported.

\subsection{Accuracy Verification}
\begin{example}\label{ex1}
We consider the one-dimensional case \eqref{fde} for $x\in[0,1]$, $T=1$ with variable coefficients $d_+(x)=(x+2)^2$, $d_-(x)=5(x+2)^3$, and the forcing term
  \begin{align*}
  f(x,t)=&192x^3(1-x)^3t^2-2^6\left[\psi_3(x)-3\psi_4(x)+3\psi_5(x)-\psi_6(x) \right]t^3,
  \end{align*}
 where $\psi_k(x)=\frac{\Gamma(k+1)}{\Gamma(k+1-\alpha)}\left[ d_+(x)x^{k-\alpha}+d_-(x)(1-x)^{k-\alpha}\right]$ for $k=3,4,5,6$.
 Then the exact solution is $u(x,t)=2^6x^3(1-x)^3t^3$.
\end{example}
 $d_+(x)/d_-(x)=\frac1{5(x+2)}$ is convex on $[0,1]$. Table \ref{table1} lists the numerical results in spatial direction with fixed $\tau=\frac1{1000}$ for different choices of $\alpha$ which satisfy the condition \eqref{cond}. From the table, second order convergence of scheme \eqref{Matrixform} in space is apparent. The numerical results in temporal direction with fixed $h=\frac1{500}$ are recorded in Table \ref{table2}. One can see that they are  in accordance with the theoretical statement.

  \begin{table}[hbt!]
 \begin{center}
 \caption{Numerical results of scheme \eqref{Matrixform} for Example \ref{ex1} in spatial direction with $\tau=\frac{1}{1000}$.}\label{table1}
 \renewcommand{\arraystretch}{1.0}
 \def\temptablewidth{0.9\textwidth}
 {\rule{\temptablewidth}{0.9pt}}
 \begin{tabular*}{\temptablewidth}{@{\extracolsep{\fill}}ccccccc}
$h$ &\multicolumn{2}{c}{$\alpha=1.03$}&\multicolumn{2}{c}{$\alpha=1.1$}
 &\multicolumn{2}{c}{$\alpha=1.5$}\\
 \cline{2-3}\cline{4-5}\cline{6-7}
                &$E_2(h,\tau)$ &$Rate1$  &$E_2(h,\tau)$  &$Rate1$  &$E_2(h,\tau)$  &$Rate1$\\\hline
       $1/32$   &2.5354e-03    & $\ast$  &2.5622e-03     & $\ast$  &2.3497e-03     &$\ast$\\
       $1/64$   &6.2839e-04    &2.0125   &6.3474e-04     &2.0132   &5.8328e-04     &2.0102 \\
       $1/128$  &1.5658e-04    &2.0047   &1.5815e-04     &2.0049   &1.4542e-04     &2.0039 \\
       $1/256$  &3.8931e-05    &2.0079   &3.9321e-05     &2.0079   &3.6155e-05     &2.0080 \\
%
\end{tabular*}
{\rule{\temptablewidth}{0.9pt}}
\end{center}
\end{table}

 \begin{table}[hbt!]
 \begin{center}
 \caption{Numerical results of scheme \eqref{Matrixform} for Example \ref{ex1} in temporal direction with $h=\frac1{500}$ and $\alpha=1.5$.}
 \label{table2}
 \renewcommand{\arraystretch}{1.0}
 \def\temptablewidth{0.5\textwidth}
 {\rule{\temptablewidth}{0.9pt}}
 \begin{tabular*}{\temptablewidth}{@{\extracolsep{\fill}}ccc}
        $\tau$    & $E_2(h,\tau)$       & $Rate2$\\\hline
        $1/10$    & 4.3687e-03          & $\ast$\\
        $1/20$    & 1.0873e-03          & 2.0065\\
        $1/40$    & 2.6764e-04          & 2.0224\\
        $1/80$    & 6.3069e-05          & 2.0853\\
 \end{tabular*}
 {\rule{\temptablewidth}{0.9pt}}
 \end{center}
 \end{table}

\begin{example}\label{ex2}
 We consider the one-dimensional case \eqref{fde} once again for $x\in[0,1]$, $T=1$ with variable coefficients $d_+(x)=\cos\left[\frac{\pi}{12}(x+2)\right]$, $d_-(x)=\frac12(x-\frac12)^2$, and the forcing term $f(x,t)$ is choosing to such that $u(x,t)=2^6x^3(1-x)^3t^3$ is still the exact solution.
 \end{example}

  \begin{table}[hbt!]
 \begin{center}
 \caption{Numerical results of scheme \eqref{Matrixform} for Example \ref{ex2} in spatial direction with $\tau=\frac{1}{1000}$.}\label{table21}
 \renewcommand{\arraystretch}{0.96}
 \def\temptablewidth{0.9\textwidth}
 {\rule{\temptablewidth}{0.9pt}}
 \begin{tabular*}{\temptablewidth}{@{\extracolsep{\fill}}ccccccc}
$h$ &\multicolumn{2}{c}{$\alpha=1.17$}&\multicolumn{2}{c}{$\alpha=1.2$}
 &\multicolumn{2}{c}{$\alpha=1.5$}\\
 \cline{2-3}\cline{4-5}\cline{6-7}
                &$E_2(h,\tau)$ &$Rate1$  &$E_2(h,\tau)$  &$Rate1$  &$E_2(h,\tau)$  &$Rate1$\\\hline
       $1/32$   &2.2324e-03    & $\ast$  &2.2215e-03     & $\ast$  &2.0459e-03     &$\ast$\\
       $1/64$   &5.5622e-04    &2.0048   &5.5364e-04     &2.0045   &5.1092e-04     &2.0015 \\
       $1/128$  &1.3883e-04    &2.0032   &1.3821e-04     &2.0021   &1.2770e-04     &2.0003 \\
       $1/256$  &3.4547e-05    &2.0067   &3.4397e-05     &2.0065   &3.1803e-05     &2.0056 \\
\end{tabular*}
{\rule{\temptablewidth}{0.9pt}}
\end{center}
\end{table}

Note that $d_-(x)/d_+(x)$ is convex on $[0,1]$. We list the numerical results in spatial direction for different choices of $\alpha$ which fulfill the condition \eqref{cond} in Table \ref{table21}. The second order convergence rates in space are clearly shown in this table.

\begin{example}\label{ex3}
 We now consider the two-dimensional case \eqref{2Dfde} for $x,y\in[0,2]$, $T=1$ with variable coefficients $d_+(x)=\cos\left[\frac{\pi}{24}(x+4) \right]$, $d_-(x)=\sin\left[\frac{\pi}{24} (x+4) \right]$, $e_+(y)=\sin\left[\frac{\pi}{12} (y+6) \right]$, $e_-(y)=\frac18(y-1)^2$ and the forcing term
  \begin{align*}
  f(x,y,t)=&3x^4(2-x)^4y^4(2-y)^4t^2-\Big\{\big[16\psi_4(x)-32\psi_5(x)+24\psi_6(x)-8\psi_7(x)\\
  &+\psi_8(x) \big]y^4(2-y)^4+x^4(2-x)^4\big[16\phi_4(y)-32\phi_5(y)+24\phi_6(y)-8\phi_7(y)+\phi_8(y) \big]  \Big\}t^3,
  \end{align*}
 where $\psi_k(x)=\frac{\Gamma(k+1)}{\Gamma(k+1-\alpha)}\left[ d_+(x)x^{k-\alpha}+d_-(x)(2-x)^{k-\alpha}\right]$ and \\
 $\phi_k(y)=\frac{\Gamma(k+1)}{\Gamma(k+1-\beta)}\left[ e_+(y)y^{k-\beta}+e_-(y)(2-y)^{k-\beta}\right]$ for $k=4,5,6,7,8$.
Then the exact solution is $u(x,y,t)=x^4(2-x)^4y^4(2-y)^4t^3$.
\end{example}

  \begin{table}[hbt!]
 \begin{center}
 \caption{Numerical results of scheme \eqref{2dADI-matrix} for Example \ref{ex3} in spatial direction with $\tau=\frac{1}{1000}$.}\label{table3}
 \renewcommand{\arraystretch}{1.0}
 \def\temptablewidth{0.9\textwidth}
 {\rule{\temptablewidth}{0.9pt}}
 \begin{tabular*}{\temptablewidth}{@{\extracolsep{\fill}}ccccccc}
$h$ &\multicolumn{3}{c}{$(\alpha,\beta)=(1.22,1.31)$}&\multicolumn{3}{c}{$(\alpha,\beta)=(1.3,1.5)$}\\
 \cline{2-4}\cline{5-7}
                &$E_2(h,\tau)$&$Rate1$ &CPU   &$E_2(h,\tau)$&$Rate1$ & CPU\\\hline %
       $1/32$   &2.4858e-03   &$\ast$  &0.20  &2.5374e-03   &$\ast$  &0.19\\
       $1/64$   &6.2222e-04   &1.9982  &0.74  &6.3691e-04   &1.9942  &0.72\\
       $1/128$  &1.5559e-04   &1.9997  &2.27  &1.5941e-04   &1.9983  &2.31\\
       $1/256$  &3.8678e-05   &2.0081  &8.27  &3.9601e-05   &2.0091  &8.27
\end{tabular*}
{\rule{\temptablewidth}{0.9pt}}
\end{center}
\end{table}
Table \ref{table3} records the numerical results by applying scheme \eqref{2dADI-matrix} for Example \ref{ex3} in spatial direction with fixed $\tau=\frac1{1000}$, different choices $\alpha$ and $\beta$ which satisfy \eqref{cond2-1} and \eqref{cond2-2}, respectively, are taken.
 Here, `CPU' denotes the CPU times (seconds) in each step. It shows that the ADI scheme \eqref{2dADI-matrix},
 which computes the solutions within 10 seconds,  works efficiently. The numerical results show that the two-dimensional ADI scheme is stable, and the spatial second convergence rates are displayed in the table.

\subsection{Examples For Demonstrating Condition \eqref{cond}}\label{counterexample}
In this subsection, we test the behavior of the scheme \eqref{Matrixform}
when it is applied to some examples which do not satisfy the condition \eqref{cond}.

In the following, we replace the functions $d_+(x)$ and $d_-(x)$ given in Example \ref{ex1} with those listed below.
 The corresponding forcing terms $f(x,t)$ are chosen such that $u(x,t)=2^6x^3(1-x)^3t^3$ is still the exact solution of these examples.
\begin{example}\label{counter1}
$d_+(x)=-10000x(1-x)+2500$ and $d_-(x)=1$.
\end{example}
 \begin{example}\label{counter2}
 $d_+(x)=\cos(35\pi x)+1.01$ and $d_-(x)=\sin(35\pi x)+1.01$.
\end{example}
\begin{example}\label{counter3}
$$d_+(x)=\left\{\begin{array}{ll}
\frac{x}{10}, & x \in (0,\frac13),\\
\frac1{15},& x\in[\frac13,\frac23],\\
-\frac1{10}x+\frac1{10},\quad & x\in(\frac23,1);\end{array}\right. \quad \mbox{and} \quad d_-(x)=1.$$
\end{example}
 \begin{example}\label{counter4}
$$d_+(x)=\left\{\begin{array}{ll}
2x, & x \in (0,\frac13),\\
20,& x\in[\frac13,\frac23],\\
-2x+2,\quad & x\in(\frac23,1);\end{array}\right. \quad \mbox{and} \quad d_-(x)=1.$$
\end{example}
In Example \ref{counter1}, we note that ${\tilde d}(x)=\frac{d_+(x)}{d_-(x)}=d_+(x)\geq0$ and is convex but it does not satisfy the condition \eqref{cond}. In Example \ref{counter2}, $d_+(x)$, $d_-(x)$, and $\frac{d_+(x)}{d_-(x)}$ or $\frac{d_-(x)}{d_+(x)}$ are all  positive  functions, but
they are neither concave nor convex on $[0,1]$.
  Table \ref{table5-4-5} lists the numerical results for these two examples with $\alpha=1.005$ and $\tau$ being very small. From this table, we can see that the scheme \eqref{Matrixform} does not work as it is expected. Although the errors between the exact solution and numerical solution seem to become small for very small $h$ (comparing with those given in previous subsection), but
  it fails to be convergent with a certain order especially the second order which it is supposed to be.
  \begin{table}[hbt!]
 \begin{center}
 \caption{Numerical results of scheme \eqref{Matrixform} for Examples \ref{counter1} and \ref{counter2} with different $h$ and fixed $\tau=\frac{1}{30000}$, and $\alpha=1.005$.}\label{table5-4-5}
 \renewcommand{\arraystretch}{1.0}
 \def\temptablewidth{0.9\textwidth}
 {\rule{\temptablewidth}{0.9pt}}
 \begin{tabular*}{\temptablewidth}{@{\extracolsep{\fill}}ccccc}
          $h$ &\multicolumn{2}{c}{Example \ref{counter1}}&\multicolumn{2}{c}{Example \ref{counter2}}\\
             \cline{2-3}\cline{4-5}
                 &$E_2(h,\tau)$ &$Rate1$  &$E_2(h,\tau)$ &$Rate1$\\\hline
       $1/32$    &5.0314e+17    & $\ast$  &2.8551e+08    & $\ast$ \\
       $1/64$    &1.2203e+13    &15.3314  &3.1606e+04    &13.1410\\
       $1/128$   &4.7119e-03    &51.2018  &1.6826e+02    &7.5533\\
       $1/256$   &3.4166e+04    &-22.7897 &4.8162e+01    &1.8047\\
       $1/512$   &4.2817e-05    &29.5718  &5.9888e-03    &12.9733\\
       $1/1024$  &3.8552e-05    &0.1514   &7.3080e-04    &3.0347\\
       $1/2048$  &3.1526e-06    &3.6122   &3.1211e-07    &11.1932\\
       $1/4096$  &2.2212e-07    &3.8272   &3.1569e-08    &3.3055\\
\end{tabular*}
{\rule{\temptablewidth}{0.9pt}}
\end{center}
\end{table}
\begin{table}[hbt!]
 \begin{center}
 \caption{Numerical results of scheme \eqref{Matrixform} for Examples \ref{counter3} and \ref{counter4} with different $h$ and fixed $\tau=\frac{1}{30000}$.}\label{table5-6}
 \renewcommand{\arraystretch}{1.0}
 \def\temptablewidth{0.9\textwidth}
 {\rule{\temptablewidth}{0.9pt}}
 \begin{tabular*}{\temptablewidth}{@{\extracolsep{\fill}}ccccc}
          $h$ &\multicolumn{2}{c}{Example \ref{counter3} with $\alpha=1.07$}&\multicolumn{2}{c}{Example \ref{counter4} with $\alpha=1.01$}\\
             \cline{2-3}\cline{4-5}
                 &$E_2(h,\tau)$ &$Rate1$  &$E_2(h,\tau)$ &$Rate1$\\\hline
       $1/32$    &2.3481e-03    & $\ast$  &2.6635e-03    & $\ast$  \\
       $1/64$    &5.8405e-04    &2.0073   &1.1948e+04    &-22.0969 \\
       $1/128$   &1.4581e-04    &2.0020   &4.3134e-04    &24.7233  \\
       $1/256$   &3.6442e-05    &2.0005   &3.1704e+05    &-29.4532 \\
       $1/512$   &9.1094e-06    &2.0002   &1.2979e-05    &34.5077  \\
       $1/1024$  &2.2771e-06    &2.0001   &1.0014e-05    &0.3742   \\
       $1/2048$  &5.6909e-07    &2.0005   &8.4860e-07    &3.5608
\end{tabular*}
{\rule{\temptablewidth}{0.9pt}}
\end{center}
\end{table}

Examples \ref{counter3} and \ref{counter4} focus on testing the scheme \eqref{Matrixform} for discontinues variable coefficients.
For reference, we see that, in Table \ref{table5-6}, the scheme is stable and second order convergent when all conditions are satisfied by the discontinuous coefficient in Example \ref{counter3} with $\alpha=1.07$. However,
in Example \ref{counter4}, ${\tilde d}(x)=\frac{d_+(x)}{d_-(x)}=d_+(x)$ is concave but does not fulfill the condition \eqref{cond}.
From Table \ref{table5-6}, we observe  numerical results
similar to those of Examples \ref{counter1} and \ref{counter2}.

 Examples \ref{counter1}, \ref{counter2} and \ref{counter4} illustrate that the behavior of the second order scheme \eqref{Matrixform} are not predictable when solving problems with nonnegative variable coefficients which are not proportional to each other, especially when $\alpha$ is close to $1$ and the variable coefficients have steep slope or strong oscillations.
 Thus, for  second (or higher) order schemes of problem \eqref{fde} imposing conditions on the variable coefficients seems to be necessary.

\section{Concluding Remarks}\label{Concluding}
  We study finite difference schemes with a type of second order discretization for spatial fractional differential equations with variable coefficients.
  The approximation of fractional derivatives bases on the WSGD
 formula. Previous results (not only for those depending on the WSGD formula)
  concentrate on the cases  when the diffusion coefficients are proportional to each other.
  In this paper, we introduce a  condition under which the scheme can be analyzed theoretically.
  Stability and convergence of the scheme for one-dimensional problem is established.

\end{document}